\newcommand{\R}{\mathbb{R}}
\newcommand{\mv}{\,\vert\, }
\newcommand{\longsetto}[1]{\mathop{\longrightarrow}\limits^#1}
\newcommand{\skalp}[1]{\langle #1\rangle}
\newcommand{\xb}{\bar x}
\newcommand{\pb}{\bar p}
\newcommand{\lb}{\bar \lambda}
\newcommand{\xba}{{\bar x^\ast}}
\newcommand{\oo}{o}
\newcommand{\Gr}{{\rm gph\,}}
\newcommand{\dom}{{\rm dom\,}}
\newcommand{\range}{{\rm rge\,}}
\newcommand{\inn}{{\rm int\,}}
\newcommand{\bd}{{\rm bd\,}}
\newcommand{\Limsup}{\mathop{{\rm Lim}\,{\rm sup}}}
\begin{document}
\title{On the Aubin property of a class of parameterized  variational systems}
\author{H. Gfrerer \and J.V. Outrata}
\institute{H. Gfrerer \at Institute of Computational Mathematics, Johannes Kepler University Linz,
              A-4040 Linz, Austria, \email{helmut.gfrerer@jku.at}
              \and J.V. Outrata \at Institute of Information Theory and Automation, Academy of Sciences
              of the Czech Republic, 18208 Prague, Czech Republic, and Centre for
              Informatics and Applied Optimization, Federation University of Australia, POB 663,
              Ballarat,  Vic 3350, Australia,  \email{outrata@utia.cas.cz}}

\maketitle
\begin{abstract}
The paper deals with a new sharp criterion ensuring the Aubin property of solution maps to a class of
parameterized variational systems. This class includes parameter-dependent variational inequalities
with  non-polyhedral constraint sets and also parameterized generalized equations with conic
constraints. The new criterion requires computation of directional limiting coderivatives of the normal-cone mapping for the
so-called critical directions.  The respective formulas have the form of a second-order chain rule and
extend the available calculus of directional limiting objects. The suggested procedure is illustrated
by means of examples.
\end{abstract}
\keywords{solution map,  Aubin property, graphical derivative, directional limiting coderivative}
\subclass{49J53, 90C31, 90C46}

\section{Introduction}

In \cite{GO3}, the authors have developed a new sufficient condition ensuring the Aubin property of
solution maps to general implicitly defined multifunctions. This property itself has been introduced in
\cite{Au} and became gradually one of the most important stability notions for multifunctions. It is
widely used in {\em post-optimal analysis}, as a useful {\em qualification condition} in generalized
differentiation and it is closely connected with several important classical results like, e.g., the
theorems of Lyusternik and Graves \cite[pp. 275-276]{DR}.

This paper is focused on the Aubin property of {\em solution maps} to parameter-dependent variational
systems and extends the currently available results collected, e.g., in \cite{DR}.
 An efficient application of the new criterion in case of standard variational systems requires
  our ability to compute graphical derivatives and directional limiting coderivatives of
 normal-cone mappings to the considered constraint sets. Unfortunately, the calculus of directional
 limiting objects is not yet sufficiently developed and also in computation of graphical derivatives of
 normal-cone mappings one often meets various too restrictive assumptions. In this paper we will
 compute graphical derivatives and directional limiting coderivatives   of normal cone mappings
 associated with the sets $\Gamma$ of the form
\begin{equation}\label{eq-1}
\Gamma = g^{-1}(D)
\end{equation}
under reasonable assumptions imposed on the mapping $g$ and the set $D$.

 To this aim we will significantly improve the results from \cite{MOR1} and \cite{MOR2} concerning the
 graphical derivative and from \cite[Theorem 4.1]{MOR2} concerning the regular coderivative of the
 normal-cone mapping associated with \eqref{eq-1}. The resulting new second-order chain rules are valid
 under substantially relaxed reducibility and nondegeneracy assumptions compared with the preceding
 results of this type and are thus important for their own sake, not only in the context of this paper.
 Concretely, the new formula for the graphical derivative could be used, e.g., in testing the so-called
 isolated calmness of solution maps to variational systems (\cite{HKO}, \cite{MOR1}, \cite{MOR2}).

 The main result (Theorem \ref{ThMain}) represents a variant of \cite[Theorem 4.4]{GO3} tailored to a
 broad class of parameterized variational systems. It improves the sharpness of the currently available
 criteria for the Aubin property in the frequently arising case when the considered parametrization is not {\em ample}, cf.   \cite[Definition 1.1]{DR1}.

The plan of the paper is as follows. In Section 2 we summarize the needed notions from variational
analysis, state the main problem and recall \cite[Theorem 4.4]{GO3} which will be used as the main tool
in our development. Section 3 is devoted to the new results concerning the mentioned graphical
derivatives and directional limiting coderivatives of the normal-cone mapping related to $\Gamma$.  In
Section 4 we will formulate the resulting new criteria for the Aubin property of the considered
solution maps and illustrate their application by means of an example. It shows the ability of the
presented approach to deal with $\Gamma$ given by nonlinear programming (NLP) constraints.   Section 5
contains some amendments which may be useful for  genuine conic constraints. In particular, we consider
the case when $D$ amounts to the Carthesian product of Lorentz cones.  \\

Our notation is standard. For a set $A$, ${\rm lin} A$ denotes the {\em linearity space} of $A$, i.e.,
the largest linear space contained in $A$, ${\rm sp} A$ is the linear hull of $A$ and $P_{A}(\cdot)$
stands for the mapping of metric projection onto $A$. For a multifunction $F$, $\Gr F$ denotes its
graph and ${\rm rge} F$ denotes its range, i.e., ${\rm rge} F:=  \{y|y \in F(x) \mbox{ for  } x \in
{\rm dom} F \}$. For a cone $K, K^{\circ}$ is the (negative) polar cone, $\mathbb{B}, \mathbb{S}$ are
the unit ball and the unit sphere, respectively, and for a vector $a$, $[a]$ stands for the linear
subspace generated by $a$. Finally, $\stackrel{A}{\rightarrow}$ means the convergence within a set $A$.

\section{Problem formulation and preliminaries}

In the first part of this section we introduce some notions from variational analysis which will be
extensively used throughout the whole paper. Consider first a general closed-graph multifunction
$F:\mathbb{R}^{n} \rightrightarrows \mathbb{R}^{z}$ and its inverse $F^{-1}:\mathbb{R}^{z}
\rightrightarrows \mathbb{R}^{n}$ and assume that $(\bar{u},\bar{v})\in \Gr F$. 
\begin{definition}\label{DefAubCalm} We say that $F$ has the {\em Aubin property} around
$(\bar{u},\bar{v})$, provided there are neighborhoods $U$ of $\bar{u}$, $V$ of $\bar{v}$ and a modulus
$\kappa > 0$ such that
\[
F(u_{1}) \cap V \subset F(u_{2})+\kappa \| u_{1}-u_{2} \| \mathbb{B} \mbox{ for all } u_{1}, u_{2} \in
U.
\]
$F$ is said to be {\em calm} at $(\bar{u},\bar{v})$, provided there is a neighborhood $V$ of
$\bar{v}$ and a modulus $\kappa > 0$ such that
\[
F(u) \cap V \subset F(\bar{u})+\kappa \| u-\bar{u} \| \mathbb{B} \mbox{ for all } u \in
\mathbb{R}^{n}.
\]
\end{definition}
It is clear that the calmness is substantially weaker (less restrictive) than the Aubin property.
Furthermore, it is known that $F$ is calm at $(\bar{u},\bar{v})$ if and only if $F^{-1}$ is {\em
metrically subregular at} $(\bar{u},\bar{v})$, i.e., there is a neighborhood $V$ of $\bar{v}$ and a
modulus $\kappa > 0$ such that 
\[
d(v,F(\bar{u}))\leq \kappa d(\bar{u},F^{-1}(v)) \mbox{ for all } v \in V,
\]
cf. \cite[Exercise 3H.4]{DR}.

 To conduct a thorough analysis of the above stability notions one typically makes use of some basic
 notions of  generalized differentiation, whose definitions are presented below.

\begin{definition}\label{DefVarGeom} Let $A$  be a closed set in $\mathbb{R}^{n}$ and $\bar{x} \in A$.
\begin{enumerate}
 \item [(i)]
 \[
 T_{A}(\bar{x}):=\Limsup\limits_{t\searrow 0} \frac{A-\bar{x}}{t}
 \]
 is the {\em tangent (contingent, Bouligand) cone} to $A$ at $\bar{x}$ and
 \[
 \hat{N}_{A}(\bar{x}):=(T_{A}(\bar{x}))^{\circ}
 \]
 is the {\em regular (Fr\'{e}chet) normal cone} to $A$ at $\bar{x}$.
 \item [(ii)]
 \[
  N_{A}(\bar{x}):=\Limsup\limits_{\stackrel{A}{x \rightarrow \bar{x}}} \hat{N}_{A}(x)
 \]
 is the {\em limiting (Mordukhovich) normal cone} to $A$ at $\bar{x}$ and, given a direction $d \in
 \mathbb{R}^{n}$,
 \[
 N_{A}(\bar{x};d):= \Limsup\limits_{\stackrel{t\searrow 0}{d^{\prime}\rightarrow
 d}}\hat{N}_{A}(\bar{x}+ td^{\prime})
 \]
 is the {\em directional limiting normal cone} to $A$ at $\bar{x}$ {\em in direction} $d$ .
 \end{enumerate}
\end{definition}
The symbol ``Limsup''  stands for the outer (upper) set limit in the sense of Painlev\'{e}-Kuratowski,
cf. \cite[Chapter 4B]{RoWe98}. If $A$ is convex, then both the regular and the limiting normal cones
coincide with the classical normal cone in the sense of convex analysis. Therefore we will use in this
case the notation $N_{A}$.

By the definition, the limiting normal cone coincides with the directional limiting normal cone in direction
$0$, i.e.,  $N_A(\xb)=N_A(\xb;0)$, and $N_A(\xb;d)=\emptyset$ whenever $d\not\in T_A(\xb)$.

The above listed cones enable us to describe the local behavior of multifunctions via various
generalized derivatives. Consider again the multifunction $F$ and the point $(\bar{u},\bar{v})\in \Gr
F$.

\begin{definition}\label{DefGenDeriv}
\begin{enumerate}
\item [(i)]
The multifunction $D F(\bar{u},\bar{v}):\mathbb{R}^{n} \rightrightarrows\mathbb{R}^{z}$, defined by
\[
DF(\bar{u},\bar{v})(d):= \{h \in \mathbb{R}^{z}| (d,h)\in T_{\Gr F}(\bar{u},\bar{v})\}, d \in
\mathbb{R}^{n}
\]
is called the {\em graphical derivative} of $F$ at $(\bar{u},\bar{v})$;
\item[(ii)]
 The multifunction $\hat D^\ast F(\bar{u},\bar{v} ):
 \mathbb{R}^{z}\rightrightarrows\mathbb{R}^{n}$, defined by
\[
\hat D^\ast F(\bar{u},\bar{v} )(v^\ast):=\{u^\ast\in \mathbb{R}^{n} | (u^\ast,- v^\ast)\in \hat
N_{\Gr F}(\bar{u},\bar{v} )\}, v^\ast\in \mathbb{R}^{z}
\]
is called the {\em regular (Fr\'echet) coderivative} of $F$ at $(\bar{u},\bar{v} )$.
\item [(iii)]
 The multifunction $D^\ast F(\bar{u},\bar{v} ): \mathbb{R}^{z}\rightrightarrows\mathbb{R}^{n}$,
 defined by
\[
D^\ast F(\bar{u},\bar{v} )(v^\ast):=\{u^\ast\in \mathbb{R}^{n} | (u^\ast,- v^\ast)\in N_{\Gr
F}(\bar{u},\bar{v} )\}, v^\ast\in \mathbb{R}^{z}
\]
is called the {\em limiting (Mordukhovich) coderivative} of $F$ at $(\bar{u},\bar{v} )$.
\item [(iv)]
 Finally, given a pair of directions $(d,h) \in \mathbb{R}^{n} \times \mathbb{R}^{z}$, the
 multifunction \\
 $D^\ast F((\bar{u},\bar{v} ); (d,h)):
 \mathbb{R}^{n}\rightrightarrows\mathbb{R}^{z}$, defined by
\begin{equation}\label{eq-150}
D^\ast  F((\bar{u},\bar{v} ); (d,h))(v^\ast):=\{u^\ast \in \mathbb{R}^{n} | (u^\ast,-v^\ast)\in
N_{\Gr F}((\bar{u},\bar{v} ); (d,h)) \}, v^\ast\in \mathbb{R}^{z}
\end{equation}
is called the {\em directional limiting coderivative} of $F$ at $(\bar{u},\bar{v} )$ in
direction $(d,h)$.
\end{enumerate}
\end{definition}

For the properties of the cones $T_A(\xb)$, $\hat N_A(\xb)$ and $N_A(\xb)$ from Definition
\ref{DefVarGeom} and generalized derivatives (i), (ii) and (iii) from Definition \ref{DefGenDeriv} we
refer the interested reader to the monographs \cite{RoWe98} and \cite{M1}. The directional limiting normal cone and coderivative were introduced by the first author in \cite{Gfr13a} and various properties of these objects
can be found in \cite{GO3} and the references
therein. Note that $D^\ast  F((\bar{u},\bar{v} ))=D^\ast  F((\bar{u},\bar{v} ); (0,0))$ and that
$\dom D^\ast  F((\bar{u},\bar{v} ); (d,h))=\emptyset$ whenever $h\not\in DF(\bar{u},\bar{v})(d)$.

Let now  $M:\mathbb{R}^{l}\times\mathbb{R}^{n}\rightrightarrows \mathbb{R}^{m}$ be a given
multifunction with a closed graph and $S:\mathbb{R}^{l} \rightrightarrows \mathbb{R}^{n}$ be the
associated {\em implicit multifunction} given by 
\begin{equation}\label{eq-99}
S(p):=\{x \in \mathbb{R}^{m} | 0 \in M(p,x)\}.
\end{equation}
In what follows, $p$ will be called the {\em parameter} and $x$ will be the {\em decision variable}.
Given a {\em reference pair} $(\bar{p},\bar{x}) \in \Gr S$, one has the following criterion for the
Aubin property of $S$ around $(\bar{p},\bar{x})$.

\begin{theorem}\label{ThImplMapping}
(\cite[Theorem 4.4, Corollary 4.5]{GO3}). Assume that
\begin{enumerate}
 \item [(i)]
 \begin{equation}\label{eq-100}
\{u|0\in DM(\bar{p},\bar{x},0)(q,u)\}\neq \emptyset \mbox{ for all } q \in \mathbb{R}^{l};
 \end{equation}
 \item [(ii)]
 $M$ is metrically subregular at $(\bar{p},\bar{x},0)$;
 \item [(iii)]
 For every nonzero $(q,u)\in \mathbb{R}^{l} \times \mathbb{R}^{n}$ verifying $0 \in
 DM(\bar{p},\bar{x},0)(q,u)$ one has the implication
 \begin{equation}\label{eq-101}
(q^{*},0)\in D^{*}M ((\bar{p},\bar{x},0); (q,u,0))(v^{*})\Rightarrow q^{*}=0.
 \end{equation}
 \end{enumerate}
 Then $S$ has the Aubin property around $(\bar{p},\bar{x})$ and for any $q \in \mathbb{R}^{l}$
 \begin{equation}\label{eq-102}
DS(\bar{p},\bar{x})(q)=\{u|0\in DM(\bar{p},\bar{x},0)(q,u)\}.
 \end{equation}
 The above assertions remain true provided assumptions (ii), (iii) are replaced by
 \begin{enumerate}
\item [(iv)] For every nonzero $(q,u)\in \mathbb{R}^{l} \times \mathbb{R}^{n}$ verifying $0 \in DM
    (\bar{p},\bar{x},0)(q,u)$ one has the implication
\begin{equation}\label{eq-103}
(q^{*},0)\in D^{*}M ((\bar{p},\bar{x},0); (q,u,0))(v^{*})\Rightarrow
\left \{
\begin{array}{l}
q^{*}=0\\ v^{*}=0.
\end{array}
\right.
 \end{equation}
 \end{enumerate}
\end{theorem}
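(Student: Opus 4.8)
The plan is to separate the two claims --- the graphical-derivative formula \eqref{eq-102} and the Aubin property itself --- establishing \eqref{eq-102} from (i)--(ii) and then deducing the Aubin property from the coderivative (Mordukhovich) criterion (\cite{RoWe98}), with the directional condition (iii) supplying the decisive nondegeneracy. Note first that $\Gr S=\{(p,x)\mid(p,x,0)\in\Gr M\}=M^{-1}(0)$ is closed, since $\Gr M$ is.

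\emph{Step 1: formula \eqref{eq-102}.} The inclusion ``$\subseteq$'' is definitional: if $u\in DS(\bar p,\bar x)(q)$, pick $t_k\searrow0$ and $(q_k,u_k)\to(q,u)$ with $0\in M(\bar p+t_kq_k,\bar x+t_ku_k)$; then $(q_k,u_k,0)\in t_k^{-1}(\Gr M-(\bar p,\bar x,0))$, so $(q,u,0)\in T_{\Gr M}(\bar p,\bar x,0)$, i.e.\ $0\in DM(\bar p,\bar x,0)(q,u)$. For ``$\supseteq$'' I would invoke (ii): since $\dist{0,M(p,x)}=\dist{(p,x,0),\Gr M}$ and $M^{-1}(0)=\Gr S$, metric subregularity of $M$ reads
\[
\dist{(p,x),\Gr S}\le\kappa\,\dist{(p,x,0),\Gr M}\qquad\text{for }(p,x)\ \text{near}\ (\bar p,\bar x).
\]
Given $0\in DM(\bar p,\bar x,0)(q,u)$, choose $t_k\searrow0$ and $(q_k,u_k,w_k)\to(q,u,0)$ with $t_kw_k\in M(\bar p+t_kq_k,\bar x+t_ku_k)$; the estimate produces $(p_k',x_k')\in\Gr S$ with $\norm{(p_k',x_k')-(\bar p+t_kq_k,\bar x+t_ku_k)}\le\kappa t_k\norm{w_k}+o(t_k)$, and since $((p_k',x_k')-(\bar p,\bar x))/t_k\to(q,u)$ we get $(q,u)\in T_{\Gr S}(\bar p,\bar x)$, i.e.\ $u\in DS(\bar p,\bar x)(q)$. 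Together with (i), \eqref{eq-102} yields $\dom DS(\bar p,\bar x)=\R^l$.

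\emph{Step 2: Aubin property.} By the coderivative criterion it suffices to show $D^{*}S(\bar p,\bar x)(0)=\{0\}$, i.e.\ that $(q^{*},0)\in N_{\Gr S}(\bar p,\bar x)$ implies $q^{*}=0$. Assume $q^{*}\neq0$. Writing this limiting normal as a limit of Fr\'echet normals at points of $\Gr S$ approaching $(\bar p,\bar x)$ and normalising the increments, one extracts a nonzero direction $(q,u)\in T_{\Gr S}(\bar p,\bar x)$ with $(q^{*},0)\in N_{\Gr S}((\bar p,\bar x);(q,u))$; by Step 1, $0\in DM(\bar p,\bar x,0)(q,u)$. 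Writing $\Gr S=h^{-1}(\Gr M)$ with the linear map $h(p,x):=(p,x,0)$, the displayed estimate of Step 1 is precisely the metric subregularity of the constraint system $h(p,x)\in\Gr M$ at $(\bar p,\bar x)$, which is the qualification condition under which the chain rule for the \emph{directional} limiting normal cone applies; it delivers some $v^{*}$ with $(q^{*},0,-v^{*})\in N_{\Gr M}((\bar p,\bar x,0);(q,u,0))$, i.e.\ $(q^{*},0)\in D^{*}M((\bar p,\bar x,0);(q,u,0))(v^{*})$. Since $(q,u)\neq0$, \eqref{eq-101} forces $q^{*}=0$ --- a contradiction. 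Hence $S$ has the Aubin property around $(\bar p,\bar x)$.

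\emph{Step 3: the variant with (iv), and the main obstacle.} Putting $q^{*}=0$ in \eqref{eq-103} reduces it to: for every nonzero $(q,u)$ with $0\in DM(\bar p,\bar x,0)(q,u)$, $\;0\in D^{*}M((\bar p,\bar x,0);(q,u,0))(v^{*})\Rightarrow v^{*}=0$, which is exactly the (directional) first-order sufficient condition for metric subregularity of $M$ at $(\bar p,\bar x,0)$; thus (iv)$\Rightarrow$(ii), and since (iv) trivially implies (iii), the second assertion follows from the first. The step I expect to be hardest is the one in Step 2 feeding a directional limiting normal of $\Gr S$ into a directional limiting coderivative of $M$: one must carefully produce the nonzero critical direction $(q,u)$ from an element of $N_{\Gr S}(\bar p,\bar x)$ and justify the directional normal-cone chain rule through $h$ purely from the metric subregularity in (ii); the rest --- the easy half of \eqref{eq-102}, the rescaling arguments, and the reduction of (iv) --- is routine.
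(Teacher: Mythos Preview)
The paper does not prove this statement: Theorem~\ref{ThImplMapping} is quoted from \cite[Theorem~4.4, Corollary~4.5]{GO3} as an external tool, without argument. There is therefore no proof in the present paper to compare your proposal against; what you have written is, however, a faithful reconstruction of the strategy one finds in \cite{GO3}.

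Two points in your sketch deserve explicit attention. First, in Step~2 the extraction of a \emph{nonzero} direction $(q,u)$ requires a preliminary observation you suppress: if $(q^\ast,0)\in\hat N_{\Gr S}(\bar p,\bar x)$ then, since $\dom DS(\bar p,\bar x)=\R^l$ by Step~1 together with (i), the polar relation $\hat N_{\Gr S}=(T_{\Gr S})^\circ$ gives $\langle q^\ast,q\rangle\le 0$ for every $q\in\R^l$, hence $q^\ast=0$. Only after ruling this out can you be sure that the Fr\'echet normals realising $(q^\ast,0)$ sit at points different from $(\bar p,\bar x)$, so that the increments can be normalised to produce a unit direction $(q,u)\in T_{\Gr S}(\bar p,\bar x)$ with $(q^\ast,0)\in N_{\Gr S}((\bar p,\bar x);(q,u))$. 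Second, the displayed estimate in Step~1, $\dist{(p,x),\Gr S}\le\kappa\,\dist{(p,x,0),\Gr M}$, is formally stronger than metric subregularity of $M$, which bounds the left side by $\kappa\,\dist{0,M(p,x)}$; the two are equivalent up to replacing $\kappa$ by $1+\kappa$ (apply the subregularity estimate at a nearby graph point $(p',x',y')$ and use the triangle inequality), and it is this stronger form that licenses the directional normal-cone chain rule through $h(p,x)=(p,x,0)$ you invoke. That chain rule under metric subregularity is precisely one of the directional tools established in \cite{Gfr13a,GO3}. With these two items supplied, Step~2 closes, and your reduction in Step~3 --- that \eqref{eq-103} with $q^\ast=0$ is exactly the first-order sufficient condition for metric subregularity of $M$ from \cite{Gfr13a}, whence (iv)$\Rightarrow$(ii) and trivially (iv)$\Rightarrow$(iii) --- is correct.
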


In this paper we will consider the case of {\em variational systems} where 
\begin{equation}\label{eq-104}
M(p,x):=H(p,x)+\hat{N}_{\Gamma}(x), ~\Gamma =g^{-1}(D) .
\end{equation}
 In (\ref{eq-104}), $H:\mathbb{R}^{l}\times \mathbb{R}^{n} \rightarrow \mathbb{R}^{n}$ is
continuously differentiable, $g:\mathbb{R}^{n} \rightarrow \mathbb{R}^{s}$ is twice continuously
differentiable and $D \subset \mathbb{R}^{s}$ is a closed  set.

We recall from \cite{GO3} that Theorem \ref{ThImplMapping} provides us in case of $M$ given by
\eqref{eq-104} with sharper (more restrictive) sufficient conditions than the currently available
criteria whenever $\nabla_p H(\pb,\xb)$ is not surjective, i.e., the considered parameterization  is not
ample at $(\pb,\xb)$.

By the continuous differentiability of $H$ one has that for $M$ given in (\ref{eq-104}) and any
$(q,u)\in \mathbb{R}^{l} \times \mathbb{R}^{n}$ 
\begin{equation}\label{eq-106}
\begin{split}
& DM(\bar{p},\bar{x},0)(q,u)= \\
  &
  \nabla_{p}H(\bar{p},\bar{x})q+\nabla_{x}H(\bar{p},\bar{x})u+D\hat{N}_{\Gamma}(\bar{x},-H(\bar{p},\bar{x}))
(u,-\nabla_{p}H(\bar{p},\bar{x})q-\nabla_{x}H(\bar{p},\bar{x})u),
\end{split}
\end{equation}
cf. \cite[Exercise 10.43]{RoWe98}. Likewise, for any $v^{*}\in \mathbb{R}^{n}$,
\begin{equation}\label{eq-107}
\begin{array}{l}
D^{*}M((\bar{p},\bar{x},0); (q,u,0))(v^{*})=\\[1ex] 
\left[ \begin{array}{l}
\nabla_{p}H(\bar{p},\bar{x})^{T}v^{*}\\
\nabla_{p}H(\bar{x},\bar{x})^{T}v^{*}+D^{*}\hat{N}_{\Gamma}((\bar{x},-H(\bar{p},\bar{x}));(u,-\nabla_{p}H(\bar{p},\bar{x})q-
\nabla_{x}H(\bar{p},\bar{x})u))(v^{*})
\end{array}\right],
\end{array}
\end{equation}
cf. \cite[Theorem 2.10]{GO3}. The application of Theorem \ref{ThImplMapping} requires thus the
computation of $D\hat{N}_{\Gamma}(\bar{x},-H(\bar{p},\bar{x}))(\cdot, \cdot)$
 and $D^{*}\hat{N}_{\Gamma}((\bar{x},-H(\bar{p},\bar{x})); (\cdot , \cdot))(v^{*})$ for directions
 generated by the vectors $q,u$. This problem will be tackled in the next section.

\section{Graphical derivatives and directional limiting coderivatives of $\hat{N}_{\Gamma}$}

Throughout this section we will impose a  weakened version of the {\em reducibility} and the
{\em nondegeneracy} conditions introduced in \cite{BS}. Concretely, in what follows we will assume that
\begin{enumerate}
 \item [(A1):]
 There exists a closed set $\Theta \subset \mathbb{R}^{d}$ along with a twice continuously
 differentiable mapping $h:\mathbb{R}^{s}\rightarrow \mathbb{R}^{d}$ and a neighborhood
 $\mathcal{V}$ of $g(\bar{x})$ such that $\nabla h(g(\bar{x}))$ is surjective and
 \[
 D \cap \mathcal{V}= \{z \in \mathcal{V}|h(z)\in \Theta\};
 \]
 \item [(A2):]
 \begin{equation}\label{eq-3.1}
\range \nabla g (\bar{x})+ \ker \nabla h (g(\bar{x}))=\mathbb{R}^{l}.
 \end{equation}
 \end{enumerate}
 Note that conditions (A1), (A2) amount to the reducibility of $D$ to $\Theta$ at $g(\bar{x})$ and the
 nondegeneracy of $\bar{x}$ with respect to $\Gamma$ and the mapping $h$ in the sense of \cite{BS}
 provided the sets $D,\Theta$ are convex. The assumptions (A1), (A2) have the following important
 impact on the representation of $\Gamma$ and $\hat{N}_{\Gamma}$ near $\bar{x}$.

\bigskip

\begin{proposition}\label{PropBasic}
Let $b:=h \circ g$. Then there exists  neighborhoods $\mathcal{U}$ of $\bar{x}$ and $\mathcal{W}
\supset g(\mathcal{U})$ of $g(\bar{x})$ such that 
\begin{equation}\label{eq-3.2}
\Gamma \cap \mathcal{U}= \{x \in \mathcal{U}| b(x)\in \Theta\},
\end{equation}
$\nabla b (x)$ is surjective for every $x \in \mathcal{U}, \nabla h(y)$ is surjective for every
 $y \in \mathcal{W}$ and
\begin{align}
&\hat{N}_{D}(y)=\nabla h(y)^{T}\hat{N}_{\Theta}(h(y)), y \in \mathcal{W},\label{eq-3.3}\\ &\hat{N}_{\Gamma}(x)=
\nabla b(x)^{T}\hat{N}_{\Theta}(b(x)) = \nabla g(x)^{T}\hat{N}_{D}(g(x)), x \in
\mathcal{U}.\label{eq-3.101}
\end{align}
\end{proposition}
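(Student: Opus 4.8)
The plan is to exploit assumptions (A1) and (A2) to reduce everything to the surjectivity of the relevant Jacobians, and then invoke the standard chain rule for regular normal cones under a constraint qualification. First I would use the surjectivity of $\nabla h(g(\bar x))$ together with continuity of $\nabla h$ to choose a neighborhood $\mathcal{W}$ of $g(\bar x)$ (shrinking $\mathcal{V}$ if necessary) on which $\nabla h(y)$ remains surjective; then, since $g$ is continuous, I would choose $\mathcal{U}$ a neighborhood of $\bar x$ small enough that $g(\mathcal{U})\subset \mathcal{W}\cap\mathcal{V}$. This immediately gives $\Gamma\cap\mathcal{U}=g^{-1}(D)\cap\mathcal{U}=\{x\in\mathcal{U}\mid g(x)\in D\cap\mathcal{V}\}=\{x\in\mathcal{U}\mid h(g(x))\in\Theta\}=\{x\in\mathcal{U}\mid b(x)\in\Theta\}$, which is \eqref{eq-3.2}.

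Next I would establish surjectivity of $\nabla b(x)=\nabla h(g(x))\nabla g(x)$ for $x\in\mathcal{U}$. At $\bar x$ this is precisely the content of (A2): the range of $\nabla h(g(\bar x))\nabla g(\bar x)$ equals $\nabla h(g(\bar x))\big(\range\nabla g(\bar x)\big)$, and since $\range\nabla g(\bar x)+\ker\nabla h(g(\bar x))=\mathbb{R}^l$ and $\nabla h(g(\bar x))$ is surjective, applying $\nabla h(g(\bar x))$ to both sides yields $\range\nabla b(\bar x)=\mathbb{R}^d$. Surjectivity is an open condition (the Jacobian has full row rank, equivalently a nonvanishing $d\times d$ minor, or equivalently $\nabla b(x)\nabla b(x)^T$ is nonsingular), and $\nabla b$ is continuous, so after shrinking $\mathcal{U}$ once more we get $\nabla b(x)$ surjective for all $x\in\mathcal{U}$.

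With surjectivity in hand, \eqref{eq-3.3} is the classical formula for the regular normal cone to a preimage under a $C^1$ map whose Jacobian is surjective (a metric-regularity/constraint-qualification argument, e.g. \cite[Theorem 6.14]{RoWe98} or \cite[Exercise 6.7]{RoWe98} applied to $D\cap\mathcal{V}=h^{-1}(\Theta)$ locally): $\hat N_D(y)=\hat N_{h^{-1}(\Theta)}(y)=\nabla h(y)^T\hat N_\Theta(h(y))$ for $y\in\mathcal{W}$. The same theorem applied to $\Gamma\cap\mathcal{U}=b^{-1}(\Theta)$, using surjectivity of $\nabla b(x)$, gives $\hat N_\Gamma(x)=\nabla b(x)^T\hat N_\Theta(b(x))$. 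Finally, to obtain the second equality in \eqref{eq-3.101} I would apply the preimage formula once more to $\Gamma\cap\mathcal{U}=g^{-1}(D\cap\mathcal{V})$; here $\nabla g(x)$ need not be surjective, so the constraint qualification to be checked is $\hat N_D(g(x))\cap\ker\nabla g(x)^T=\{0\}$ (equivalently $\range\nabla g(x)+ (\hat N_D(g(x)))^*$ considerations), which I would verify by combining \eqref{eq-3.3} with (A2): any $y^*\in\hat N_D(g(x))$ with $\nabla g(x)^Ty^*=0$ can be written $y^*=\nabla h(g(x))^T\theta^*$ with $\theta^*\in\hat N_\Theta(b(x))$, and then $\nabla b(x)^T\theta^*=0$ forces $\theta^*=0$ by surjectivity of $\nabla b(x)$, hence $y^*=0$. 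This yields $\hat N_\Gamma(x)=\nabla g(x)^T\hat N_D(g(x))$, and chaining through \eqref{eq-3.3} one checks the two expressions for $\hat N_\Gamma(x)$ agree.

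The main obstacle I anticipate is not any single deep step but the bookkeeping of the successive neighborhood shrinkings and making sure a single pair $(\mathcal{U},\mathcal{W})$ works simultaneously for \eqref{eq-3.2}, both surjectivity claims, and both normal-cone identities; one must also be slightly careful that the preimage chain rule is being applied to the \emph{local} representations $D\cap\mathcal{V}=h^{-1}(\Theta)$ and $\Gamma\cap\mathcal{U}=b^{-1}(\Theta)$ rather than to global preimages, which is legitimate since regular normal cones are local objects. The verification of the constraint qualification $\hat N_D(g(x))\cap\ker\nabla g(x)^T=\{0\}$ for the non-surjective map $g$ is the one genuinely non-mechanical point, and the argument above via (A2) is the natural way to dispatch it.
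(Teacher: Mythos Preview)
Your proposal is correct and follows essentially the same route as the paper: establish surjectivity of $\nabla b(\bar x)$ from (A2) (you argue directly via ranges, the paper via $\ker\nabla b(\bar x)^T=\{0\}$, which is equivalent), extend by continuity to suitable neighborhoods, and then invoke the surjective-Jacobian preimage formula \cite[Exercise~6.7]{RoWe98} for $h$ and for $b$ to obtain \eqref{eq-3.3} and the first equality in \eqref{eq-3.101}.

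One small caveat on the second equality in \eqref{eq-3.101}: your standalone argument via the constraint qualification $\hat N_D(g(x))\cap\ker\nabla g(x)^T=\{0\}$ does not by itself give equality for the \emph{regular} normal cone when $\nabla g(x)$ is not surjective---the general chain rule (e.g.\ \cite[Theorem~6.14]{RoWe98}) only yields $\hat N_\Gamma(x)\supset\nabla g(x)^T\hat N_D(g(x))$ unless $D$ is Clarke regular at $g(x)$. The correct and complete route is the one you mention at the end and which the paper uses implicitly: simply substitute \eqref{eq-3.3} into the first equality of \eqref{eq-3.101}, using $\nabla b(x)^T=\nabla g(x)^T\nabla h(g(x))^T$, to obtain $\hat N_\Gamma(x)=\nabla b(x)^T\hat N_\Theta(b(x))=\nabla g(x)^T\nabla h(g(x))^T\hat N_\Theta(h(g(x)))=\nabla g(x)^T\hat N_D(g(x))$. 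The CQ verification is therefore superfluous.
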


 \begin{proof}

 First we show that \eqref{eq-3.1} is equivalent with the surjectivity of $\nabla b(\xb) = \newline
 \nabla
 h(g(\xb))\nabla g(\xb)$. Indeed, $\nabla b(\xb)$ is surjective if and only if
  \[\{0\}=\ker \nabla b(\xb)^T=\ker (\nabla g(\xb)^T\nabla h(g(\xb))^T),\]
  which, by the assumed surjectivity of $\nabla h(g(\xb))$, in turn holds if and only if
  \begin{eqnarray*}\{0\}&=& \ker \nabla
  g(\xb)^T\cap ~ \range \nabla h(g(\xb))^T
   =\Big((\ker \nabla g(\xb)^T)^\perp+ (\range \nabla h(g(\xb))^T)^\perp\Big)^\perp\\
   &=&(\range \nabla
   g(\xb)+\ker \nabla h(g(\xb)))^\perp\end{eqnarray*}
  and this is clearly equivalent with \eqref{eq-3.1}.
  Hence $\nabla b(\xb)$ is surjective and we can find  open neighborhoods $\mathcal{W}\subset
  \mathcal{V}$ and $\mathcal{U}\subset g^{-1}(\mathcal{W})$ of $\xb$ such that $\nabla b(x)$ is
  surjective for all $x\in \mathcal{U}$ and $\nabla h(y)$ is surjective for all $y\in
  \mathcal{W}$, where $\mathcal{V}$ is given by assumption (A1). Hence for every $x\in \mathcal{U}$ we
  have $g(x)\in \mathcal{V}$ and (\ref{eq-3.2}) follows from (A1). The descriptions of the regular
  normal cones (\ref{eq-3.3}),
  (\ref{eq-3.101}) result from \cite[Exercise 6.7]{RoWe98}.
  \qed\end{proof}

\begin{remark}
  Note that, given a vector $x^\ast\in\widehat N_\Gamma(x)$ with $x\in \Gamma\cap \mathcal{U}$, there
  is a  unique $\lambda\in N_D(g(x))$ satisfying
  \begin{equation}
     \label{Eqxast}x^\ast=\nabla g(x)^T\lambda.
  \end{equation}
  Indeed, from (\ref{eq-3.101}) it follows that there is a unique $\mu\in\widehat N_{\Theta}(b(x))$
  such that $x^\ast=\nabla b(x)^T\mu$ thanks to the surjectivity of $\nabla b(x)$. Since
  $\lambda=\nabla
  h(g(x))^T\mu$, we are done.
\end{remark}

The rest of this section is divided to two subsections devoted to the graphical derivatives and the
directional limiting coderivatives of $\hat{N}_{\Gamma}$, respectively.

\subsection{Graphical derivatives of $\hat{N}_{\Gamma}$}

The computation of graphical derivatives of $\hat{N}_{\Gamma}$ has been considered in  numerous works,
see \cite{RoWe98} and the references therein. Recently, in \cite{MOR1} and \cite{MOR2} the authors have
derived two different
 formulas for $D \hat{N}_{\Gamma}$ by using a strengthened variant of (A1), (A2) together with some
 additional assumptions. They include either the convexity of $\Gamma$ or a special {\em projection
 derivation condition} (PDC) defined next.
\begin{definition}\label{PDC}
 A convex set $\Xi$ satisfies the {\em projection derivation condition} (PDC) at the point $\bar{z} \in \Xi$ if we have
 \[
 P_{\Xi}(\bar{z} + b; h)=P_{K(\bar{z},b)} (h) ~\mbox{ for all }~ b \in N_{\Xi}(\bar{z})~\mbox{ and }~ h \in \mathbb{R}^{s},
 \]
where $K (\bar{z},b):=T_{\Xi}(\bar{z}) \cap \{b\}^{\perp}$.
 \end{definition}

 In our case the PDC condition is automatically fulfilled provided $D$ is convex polyhedral.
 Throughout sections 3.1. and 3.2  it is enough to assume,  however,  the weakened reducibility and nondegeneracy assumptions (A1), (A2) and we obtain
 new workable formulas without any additional requirements.

 \bigskip

\begin{theorem}\label{ThGraphDer}
 Let assumptions (A1), (A2) be fulfilled, $\bar{x}^{*}\in \hat{N}_{\Gamma}(\bar{x})$ and
 $\bar{\lambda}$  be the (unique) multiplier satisfying
 \begin{equation}\label{eq-3.55} \lb\in  \hat{N}_{D}(g(\bar{x})),\;
\nabla g(\xb)^T\lb=\xba.
 \end{equation}
  Then
 \begin{equation}\label{eq-3.6}
\begin{split}
&T_{\Gr \hat{N}_{\Gamma}} (\bar{x},\bar{x}^{*})= \\ &\{(u, u^{*})| \exists \xi : (\nabla g (\bar{x})u,
\xi)\in T_{\Gr \hat{N}_{D}} (g(\bar{x}), \bar{\lambda}), u^{*}=\nabla g (\bar{x})^{T}\xi +
\nabla^{2} \skalp{ \bar{\lambda},g} (\bar{x})u \}.
\end{split}
 \end{equation}
\end{theorem}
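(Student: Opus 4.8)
The plan is to derive the formula by differentiating the reduced description of $\Gamma$ provided by Proposition \ref{PropBasic}. The key observation is that on the neighborhood $\mathcal{U}$ we have the chain-rule formula $\hat N_\Gamma(x)=\nabla g(x)^T\hat N_D(g(x))$, and, since $\nabla b(x)=\nabla h(g(x))\nabla g(x)$ is surjective on $\mathcal{U}$, the multiplier $\lambda\in\hat N_D(g(x))$ with $x^\ast=\nabla g(x)^T\lambda$ is uniquely determined (see the Remark following the Proposition). Thus near $(\xb,\xba)$ the graph of $\hat N_\Gamma$ can be parameterized as $\Gr\hat N_\Gamma\cap(\mathcal{U}\times\mathbb{R}^n)=\{(x,\nabla g(x)^T\lambda)\mid x\in\mathcal{U},\ \lambda\in\hat N_D(g(x))\}$. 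I would write this as the image of $\Gr\hat N_D$ under the $C^1$ map $\Psi(y,\mu):=(g^{-1}_{\rm loc}(y),\ \nabla g(g^{-1}_{\rm loc}(y))^T\mu)$ — or, more carefully since $g$ need not be invertible, realize $\Gr\hat N_\Gamma$ locally as $\{(x,\xi^\ast)\mid (g(x),\xi)\in\Gr\hat N_D,\ \xi^\ast=\nabla g(x)^T\xi\}$, i.e. as the preimage of $\Gr\hat N_D\times\{0\}$ under a suitable $C^1$ submersion in $(x,\xi^\ast,\xi)$-space, then project out $\xi$.

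The concrete computation runs as follows. Define $\Phi:\mathcal{U}\times\mathbb{R}^n\times\mathbb{R}^s\to\mathbb{R}^n\times\mathbb{R}^s$ by $\Phi(x,\xi^\ast,\xi):=(x^\ast-\nabla g(x)^T\xi,\ g(x),\xi)$, wait—I will instead use the clean route: consider the $C^1$ map $F:\mathcal{U}\times\mathbb{R}^s\to\mathbb{R}^n\times\mathbb{R}^n$, $F(x,\xi):=(x,\nabla g(x)^T\xi)$, so that $\Gr\hat N_\Gamma\cap(\mathcal{U}\times\mathbb{R}^n)=F\big((g,{\rm id})^{-1}(\Gr\hat N_D)\big)$, where $(g,{\rm id})(x,\xi)=(g(x),\xi)$. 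Since $\nabla g(x)$ has full row rank on $\mathcal{U}$, the map $(g,{\rm id})$ is a submersion onto $\mathbb{R}^s\times\mathbb{R}^s$, so the standard tangent-cone preimage rule gives, at a point $(\xb,\lb)$ with $(g(\xb),\lb)\in\Gr\hat N_D$,
\[
T_{(g,{\rm id})^{-1}(\Gr\hat N_D)}(\xb,\lb)=\{(u,\eta)\mid (\nabla g(\xb)u,\eta)\in T_{\Gr\hat N_D}(g(\xb),\lb)\}.
\]
Then I apply the image rule for tangent cones under the $C^1$ map $F$ (which is legitimate here because the set $(g,{\rm id})^{-1}(\Gr\hat N_D)$ is the graph of a multifunction composed with the submersion, hence the image rule holds with equality — or one argues directly with sequences): writing $v^\ast:=\nabla g(\xb)^T\xi$ for the second component, $\nabla F(\xb,\lb)(u,\eta)=(u,\ \nabla g(\xb)^T\eta+\nabla^2\langle\lb,g\rangle(\xb)u)$, which yields exactly the claimed set after renaming $\eta$ as $\xi$.

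The main obstacle I anticipate is justifying the image rule for the tangent cone under $F$ with \emph{equality} rather than mere inclusion; in general $T_{F(C)}(F(c))\supset\nabla F(c)\,T_C(c)$ needs a metric-regularity-type argument for the reverse inclusion. Here the way around it is to observe that $C:=(g,{\rm id})^{-1}(\Gr\hat N_D)$ together with the specific structure $F(x,\xi)=(x,\nabla g(x)^T\xi)$ makes $\Gr\hat N_\Gamma$ itself the image, and the crucial point is that the fiber of $F$ over a point of its image, intersected with $C$, is well-behaved: because of uniqueness of the multiplier (the Remark), $F$ restricted to $C$ is in fact injective in the $x$-direction in an appropriate sense, and more usefully the composite "solve for $\xi$ from $\nabla b(x)^T$ surjective" shows $C$ is locally the graph of a set-valued map in $x$. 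One can then either invoke \cite[Exercise 6.7]{RoWe98} / the change-of-coordinates result once more, or give a short direct sequential proof: take $(u,u^\ast)\in T_{\Gr\hat N_\Gamma}(\xb,\xba)$, pick $t_k\searrow0$, $(x_k,x_k^\ast)\to(\xb,\xba)$ in $\Gr\hat N_\Gamma$ with difference quotients converging; let $\lambda_k$ be the associated unique multipliers, use surjectivity of $\nabla b$ on $\mathcal U$ to extract a bounded (hence convergent after passing to a subsequence) sequence $(\lambda_k-\lb)/t_k\to\xi$, and pass to the limit in $x_k^\ast=\nabla g(x_k)^T\lambda_k$ using the second-order Taylor expansion of $x\mapsto\nabla g(x)^T\lambda$. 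The reverse inclusion is the straightforward direction. I would present the direct sequential argument, as it also makes the boundedness of $(\lambda_k-\lb)/t_k$ (which is where nondegeneracy (A2), via surjectivity of $\nabla b$, really enters) completely transparent.
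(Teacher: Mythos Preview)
Your proposal contains a genuine gap: you repeatedly assume that $\nabla g(\xb)$ is surjective (``has full row rank''), but (A1), (A2) do \emph{not} imply this. What the nondegeneracy condition (A2) gives is only surjectivity of the \emph{composite} $\nabla b(\xb)=\nabla h(g(\xb))\nabla g(\xb)$. Thus the map $(g,{\rm id})$ need not be a submersion, and the preimage rule you quote for $T_{(g,{\rm id})^{-1}(\Gr\hat N_D)}$ is not available. The same problem resurfaces in your sequential sketch: from $x_k^\ast=\nabla g(x_k)^T\lambda_k$ one obtains that $\nabla g(\xb)^T(\lambda_k-\lb)/t_k$ is bounded, but since $\nabla g(\xb)^T$ need not be injective this does \emph{not} force $(\lambda_k-\lb)/t_k$ to be bounded. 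Invoking ``surjectivity of $\nabla b$'' here is a non sequitur unless you first pass to the $\Theta$-level multipliers $\mu_k$ satisfying $x_k^\ast=\nabla b(x_k)^T\mu_k$.

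This is exactly what the paper does: it works with $\mu_k\in\hat N_\Theta(b(x_k))$, uses surjectivity of $\nabla b(\xb)$ to get $(\mu_k-\bar\mu)/t_k\to\eta$, and only then recovers $\xi=\lim(\lambda_k-\lb)/t_k$ via $\lambda_k=\nabla h(g(x_k))^T\mu_k$. A second point you dismiss too quickly is the reverse inclusion: given $(\nabla g(\xb)u,\xi)\in T_{\Gr\hat N_D}(g(\xb),\lb)$, the approximating points live at $g(\xb)+t_kv_k$, which generically is \emph{not} of the form $g(x_k)$ since $g$ is not onto. The paper handles this by solving $b(x_k)=h(g(\xb)+t_kv_k)$ (possible because $\nabla b$ is surjective) and checking that the resulting $x_k$ has the right first-order behavior; this step is not ``straightforward'' in your setting. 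Your overall architecture is fine for the special case $\nabla g(\xb)$ surjective, but the reduction via $\Theta$ and $b$ is precisely the content that (A1) is designed to supply, and it is missing from your argument.
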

  \begin{proof}
Let $(u,u^\ast)\in T_{\Gr \widehat N_\Gamma}(\xb,\xba)$ and consider sequences $t_k\searrow 0$ and
$(u_k,u_k^\ast)\to (u,u^\ast)$ with $x_k^\ast:=\xba+t_ku_k^\ast\in \widehat N_{\Gamma}(x_k)$, where
$x_k:=\xb+t_ku_k$. We can assume that $x_k\in \mathcal{U}$  and that $\nabla b(x_k)$ is surjective for
all $k$, where $b$ and $\mathcal{U}$ are given by Proposition \ref{PropBasic}. Hence we can find
multipliers  $\mu^k\in \widehat N_\Theta(b(x_k))$ such that $x_k^\ast=\nabla b(x_k)^T\mu^k$. The
sequence $\mu^k$ is bounded and, after passing to some subsequence, converges to some $\bar\mu\in
\widehat N_\Theta(h(g(\xb)))$ with $\xba=\nabla b(\xb)^T\bar\mu$. Further, by (\ref{eq-3.3}) we have
$\bar\lambda=\nabla h(g(\xb))^T\mu$ for some $\mu\in \widehat N_\Theta(h(g(\xb)))$ implying
$\xba=\nabla b(\xb)^T\mu$ and $\bar\mu=\mu$ follows from the surjectivity of $\nabla b(\xb)$.\\ 
Since
\[t_ku_k^\ast=x_k^\ast-\xba=\nabla b(x_k)^T\mu_k-\nabla b(\xb)^T\bar \mu
=t_k\nabla^2\skalp{\bar\mu, b}(\xb)u_k+\nabla b(\xb)^T(\mu^k-\bar\mu)+\oo(t_k),\] we obtain that
\[\nabla b(\xb)^T\frac{\mu^k-\bar\mu}{t_k}=u^\ast-\nabla^2\skalp{\bar\mu, b}(\xb)u+\oo(t_k)/t_k.\]
By the surjectivity of $\nabla b(\xb)$ we obtain that the sequence $\eta^k:=(\mu^k-\bar \mu)/t_k$ is
bounded and, after passing to some subsequence, $\eta^k$ converges to some $\eta$ fulfilling
\[\nabla b(\xb)^T\eta=u^\ast-\nabla^2\skalp{\bar\mu, b}(\xb)u.\]
Denoting $\lambda^k=\nabla h(g(x_k))\mu^k$ we obtain $\lambda^k\in \widehat N_D(g(x_k))$ by
(\ref{eq-3.3}) and
\[
\begin{split}
& \lambda^k-\bar\lambda=\\
& \nabla h(g(x_k))^T\mu_k-\nabla h(g(\xb))^T\bar\mu=\nabla^2\skalp{\bar\mu,
h}(g(\xb))\nabla g(\xb)(t_ku_k)+\nabla h(g(\xb))^T(\mu_k-\bar\mu)+\oo(t_k),
\end{split}
\]
implying that $(\lambda^k-\bar\lambda)/t_k$ converges to
\begin{equation}\label{EqXi}\xi:=\nabla^2\skalp{\bar\mu, h}(g(\xb))\nabla g(\xb)u+\nabla
h(g(\xb))^T\eta.
\end{equation}
 We conclude $(\nabla g(\xb)u,\xi)\in T_{\Gr \widehat N_D}(g(\xb),\bar\lambda)$ and
\begin{eqnarray*}u^\ast&=&\nabla b(\xb)^T\eta+\nabla^2\skalp{\bar\mu, b}(\xb)u\\
&=&\nabla g(\xb)^T\nabla h(g(\xb))^T\eta+\nabla g(\xb)^T\nabla^2\skalp{\bar\mu, h}(g(\xb))\nabla
g(\xb)u+\nabla^2\skalp{\nabla h(g(\xb)^T\bar\mu, g}(\xb)u\\ &=&\nabla
g(\xb)^T\xi+\nabla^2\skalp{\lb,g}(\xb)u\end{eqnarray*} showing
\[
\begin{split}
&(u,u^\ast)\in\\
& {\cal T}:=\{(u,u^\ast)\mv \exists \xi: (\nabla g(\xb)u,\xi)\in T_{\Gr\widehat
N_D}(g(\xb),\lambda),\ u^\ast= \nabla g(\xb)^T\xi+\nabla^2\skalp{\lb,g}(\xb)u\}
\end{split}
\]
Thus $T_{\Gr\widehat N_\Gamma}(\xb,\xba)\subset {\cal T}$ holds.

In order to show the reverse inclusion $T_{\Gr\widehat N_\Gamma}(\xb,\xba)\supset {\cal T}$, consider
$(u,u^\ast)\in {\cal T}$ together with some corresponding $\xi$. Then there are sequences $t_k\searrow
0$, $v_k\to\nabla g(\xb)u$ and $\xi^k\to\xi$ such that $\bar\lambda+t_k\xi^k\in\widehat
N_D(g(\xb)+t_kv_k)$ and thus $h(g(\xb)+t_kv_k)\in \Theta$ and $\bar\lambda+t_k\xi^k=\nabla
h(g(\xb)+t_kv_k))^T\mu^k$ with $\mu^k\in \widehat N_\Theta(h(g(\xb)+t_kv_k)$ for all $k$ sufficiently
large. Further, the sequence $\mu^k$ is bounded. Since
\begin{eqnarray*}b(\xb+t_ku_k)-h(g(\xb)+t_kv_k)&=&\nabla b(\xb)(t_ku_k)-\nabla
h(g(\xb))(t_kv_k)+\oo(t_k)\\
&=&t_k\nabla h(g(\xb))(\nabla g(\xb)u-v_k)+\oo(t_k)=\oo(t_k)\end{eqnarray*} and $\nabla b(\xb)$ is
surjective, we can find for each $k$ sufficiently large some $x_k$ with
$b(x_k)=h(g(\xb)+t_kv_k)\in\Theta$ and $x_k-(\xb+t_ku_k)=\oo(t_k)$. It follows that
\[\nabla b(x_k)^T\mu^k=\nabla g(x_k)^T\nabla h(g(x_k))^T\mu^k\in\widehat N_\Gamma(x_k)\]
and
\begin{eqnarray*}\nabla b(x_k)^T\mu^k-\xba&=&\nabla g(x_k)^T\nabla h(g(x_k))^T\mu^k-\xba\\
&=&\nabla g(x_k)^T\big(\nabla h(g(x_k))^T\mu^k-\bar
\lambda\big)+\nabla^2\skalp{\lb,g}(\xb)(x_k-\xb)+\oo(t_k)\\ &=&\nabla g(x_k)^T\big(\nabla
h(g(\xb)+t_kv_k)^T\mu^k-\bar
\lambda+\oo(t_k)\big)+t_k\nabla^2\skalp{\lb,g}(\xb)u+\oo(t_k)\\ &=&t_k\nabla
g(x_k)^T\xi^k+t_k\nabla^2\skalp{\lb,g}(\xb)u+\oo(t_k)\\ &=&t_k\big(\nabla
g(\xb)^T\xi+\nabla^2\skalp{\lb,g}(\xb)u\big)+\oo(t_k)\end{eqnarray*} showing $(u,u^\ast)\in T_{\Gr
\widehat N_\Gamma}(\xb,\xba)$.
\qed\end{proof}
 \begin{remark}
  Everything remains true if we replace $\widehat N_\Gamma$, $\widehat N_D$, $\widehat N_\Theta$ by
  $N_\Gamma$, $N_D$, $N_\Theta$,
\end{remark}
\begin{remark}
  Note that to each pair $(u,u^\ast)\in T_{\Gr\widehat N_\Gamma}(\xb,\xba)$ there is a unique $\xi$
  satisfying the relations on the right-hand side of  (\ref{eq-3.6}). Its existence has been shown in
  the first
  part of the proof and its uniqueness follows from \eqref{EqXi} and the uniqueness of $\eta$ implied
  by the surjectivity  of $\nabla b(\xb)$.
\end{remark}
 From (\ref{eq-3.6}) one can relatively easily derive the formulas from \cite{MOR1} and \cite{MOR2} by
 imposing appropriate additional assumptions. Indeed, let us suppose that, in addition to (A1), (A2),
 $D$  is convex and the (single-valued) operator $P_{D}$ is directionally differentiable at
 $g(\bar{x})$. Then one has the relationship
 \[
 \begin{array}{l}
 T_{\Gr N_{D}}(g(\bar{x}), \bar{\lambda})=
 \left \{
 (v,w) \left |
 \left[ \begin{array}{l}
 v + w\\
 v
 \end{array}\right]
 \in T_{\Gr P_{D}}(g(\bar{x})+\bar{\lambda}, g(\bar{x}))
 \right. \right\} =\\
  \{(v,w)| v=P^{\prime}_{D}(g(\bar{x})+\bar{\lambda}; v+w)\},
 \end{array}
 \]
 which implies that under the posed additional assumptions the relation
 \begin{equation}\label{eq-3.65}
(\nabla g(\bar{x})u, \xi)\in T_{\Gr N_{D}}(g(\bar{x}), \bar{\lambda})
 \end{equation}
 amounts to the equation
 \begin{equation}\label{eq-3.7}
\nabla g(\bar{x})u = P^{\prime}_{D} (g(\bar{x})+\bar{\lambda}; \nabla g(\bar{x})u+\xi).
 \end{equation}
 Formula (\ref{eq-3.6}) attains thus exactly the form from \cite[Theorem 3.3]{MOR1}. Note that in this
 way it was not necessary to assume the convexity of $\Gamma$ like in \cite{MOR1}. Thanks to this, upon
 imposing the PDC condition on $D$ at $g(\bar{x})$, one gets from (\ref{eq-3.7})
 that
 \begin{equation}\label{eq-3.8}
\nabla g(\bar{x})u = P_{K}(\nabla g(\bar{x})u+\xi),
 \end{equation}
 where $K$ stands for the critical cone to $D$ at $g(\bar{x})$ with respect to $\bar{\lambda}$, i..e.,
 $K = T_{D}(g(\bar{x}))\cap [\bar{\lambda}]^{\perp}$. From (\ref{eq-3.8}) we easily deduce that
 \[
 \xi \in N_{K}(\nabla g(\bar{x})u)
 \]
 and relation (\ref{eq-3.6}) thus simplifies to
 \begin{equation}\label{eq-3.9}
T_{\Gr \hat{N}_{\Gamma}} (\bar{x}, \bar{x}^{*})=\{(u,u^{*})|u^{*}\in \nabla^{2} \langle \bar{\lambda},g
\rangle(\bar{x})u+\nabla g(\bar{x})^{T}N_{K}(\nabla g(\bar{x})u) \}.
 \end{equation}
 We have recovered the formula from \cite[Theorem 5.2]{MOR2}. This enormous simplification of the way
 how this result has been derived is due to Theorem \ref{ThGraphDer} and the
 equivalence of relations (\ref{eq-3.65}), (\ref{eq-3.7}) (under the posed additional assumptions).

 As mentioned above, the PDC condition automatically holds whenever $D$ is a convex polyhedral set. Thus, for instance, in
 case of variational systems with $\Gamma$ given by NLP constraints, one can compute
 $DM(\bar{p},\bar{x},0)(q,u)$ by the workable formula
 \begin{equation}\label{eq-3.10}
DM(\bar{p},\bar{x},0)(q,u)=\nabla_{p}H(\bar{p},\bar{x})q+\nabla_{x}\mathcal{L}(\bar{p},\bar{x},\bar{\lambda})u+\nabla
g(\bar{x})^{T}N_{K}(\nabla g(\bar{x})u),
 \end{equation}
 where
 \[
 \mathcal{L}(p,x,\lambda):= H(p,x)+\nabla g(x)^{T}\lambda
 \]
 is the {\em Lagrangian} associated with the considered variational system.

 \subsection{Regular and directional limiting coderivatives of $\hat N_\Gamma$}

 \begin{theorem}\label{ThRegNormalCone}
   Let assumptions (A1), (A2) be fulfilled, $\xba\in\hat N_\Gamma(\xb)$ and $\lb$ be the (unique)
   multiplier satisfying \eqref{eq-3.55}. Then
   \begin{equation}\label{EqRegNormalCone}
   \hat N_{\Gr \hat N_\Gamma}(\xb,\xba)=\left\{(w^\ast,w)\mv \exists v^\ast:\begin{array}{l} (v^\ast,
   \nabla g(\xb)w)\in \hat N_{\Gr \hat N_D}(g(\xb),\lb),\\
    w^\ast= -\nabla^2\skalp{\lb,g}(\xb)w +\nabla g(\xb)^Tv^\ast\end{array}\right\}.
   \end{equation}
 \end{theorem}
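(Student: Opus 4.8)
The plan is to mimic the structure of the proof of Theorem~\ref{ThGraphDer}, replacing the tangent cone $T_{\Gr\widehat N_\Gamma}(\xb,\xba)$ by the regular normal cone $\hat N_{\Gr\widehat N_\Gamma}(\xb,\xba)$, and using the variational description of $\hat N$ as a polar of the tangent cone. Since Theorem~\ref{ThGraphDer} already gives an explicit formula for $T_{\Gr\widehat N_\Gamma}(\xb,\xba)$, one could in principle just compute the polar of the right-hand side of \eqref{eq-3.6}; but the reduction $\Gamma\cap\mathcal{U}=b^{-1}(\Theta)$ with $b=h\circ g$ and $\nabla b(\xb)$ surjective suggests it is cleaner to first establish the statement for $\Theta$ in place of $D$ (i.e.\ with $h=\mathrm{id}$, the "abstract" case where $\Gamma=g^{-1}(\Theta)$ and $\nabla g(\xb)$ is surjective onto the relevant directions), and then compose. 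Concretely, I would first prove the analogue of \eqref{EqRegNormalCone} with $g$ replaced by $b$ and $D$ replaced by $\Theta$, $\lb$ replaced by the unique $\bar\mu\in\hat N_\Theta(b(\xb))$ with $\nabla b(\xb)^T\bar\mu=\xba$, and then use the chain rule $\widehat N_\Gamma(x)=\nabla g(x)^T\widehat N_D(g(x))$ from \eqref{eq-3.101} together with $\bar\lambda=\nabla h(g(\xb))^T\bar\mu$ to pass from $\Theta$ back to $D$.

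For the inclusion "$\subset$" I would take $(w^\ast,w)\in\hat N_{\Gr\widehat N_\Gamma}(\xb,\xba)$, which by definition means
\[
\langle w^\ast,u\rangle + \langle w,u^\ast\rangle \le o(\|(u,u^\ast)\|)\quad\text{for }(u,u^\ast)\in T_{\Gr\widehat N_\Gamma}(\xb,\xba),
\]
i.e.\ $(w^\ast,-w)\in (T_{\Gr\widehat N_\Gamma}(\xb,\xba))^\circ$ — wait, more precisely $(w^\ast,w)$ annihilates the tangent cone from above. Using formula \eqref{eq-3.6}, every element of the tangent cone is of the form $(u,\nabla g(\xb)^T\xi+\nabla^2\langle\lb,g\rangle(\xb)u)$ where $(\nabla g(\xb)u,\xi)\in T_{\Gr\widehat N_D}(g(\xb),\lb)$. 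Substituting, the defining inequality becomes a linear functional on the pairs $(\nabla g(\xb)u,\xi)$ ranging over $T_{\Gr\widehat N_D}(g(\xb),\lb)$: after collecting terms, $\langle w^\ast - \nabla^2\langle\lb,g\rangle(\xb)w,\,u\rangle + \langle \nabla g(\xb)w,\,\xi\rangle$ (using symmetry of the Hessian) must be $\le 0$ on that cone. The surjectivity of $\nabla g(\xb)$ on the relevant subspace (inherited via $\nabla b(\xb)$ surjective) lets me conclude that $u$ can be replaced by $v:=\nabla g(\xb)u$ ranging freely, so that $(v^\ast,\nabla g(\xb)w)\in \hat N_{\Gr\widehat N_D}(g(\xb),\lb)$ for an appropriate $v^\ast$ with $\nabla g(\xb)^Tv^\ast = w^\ast-\nabla^2\langle\lb,g\rangle(\xb)w$; constructing this $v^\ast$ (it need not be unique unless we work through $b$) and checking it lies in the right regular normal cone is the delicate bookkeeping step.

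For the reverse inclusion "$\supset$" I would start from $(v^\ast,\nabla g(\xb)w)\in\hat N_{\Gr\widehat N_D}(g(\xb),\lb)$ and $w^\ast=-\nabla^2\langle\lb,g\rangle(\xb)w+\nabla g(\xb)^Tv^\ast$, and verify the defining inequality of $\hat N_{\Gr\widehat N_\Gamma}(\xb,\xba)$ directly against an arbitrary tangent vector $(u,u^\ast)\in T_{\Gr\widehat N_\Gamma}(\xb,\xba)$, again using \eqref{eq-3.6} to write $u^\ast=\nabla g(\xb)^T\xi+\nabla^2\langle\lb,g\rangle(\xb)u$ with $(\nabla g(\xb)u,\xi)\in T_{\Gr\widehat N_D}(g(\xb),\lb)$: the inner products then reorganize, via symmetry of $\nabla^2\langle\lb,g\rangle(\xb)$, into $\langle v^\ast,\nabla g(\xb)u\rangle + \langle \nabla g(\xb)w,\xi\rangle\le 0$, which holds by hypothesis. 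This direction is essentially a direct algebraic verification and should be routine.

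The main obstacle I anticipate is the "$\subset$" direction, specifically handling the fact that the correspondence between $(w^\ast,w)$ and $(v^\ast,\nabla g(\xb)w)$ goes the "wrong way" through the surjection $\nabla g(\xb)$: from $\nabla g(\xb)^Tv^\ast$ being prescribed one recovers $v^\ast$ only up to $\ker\nabla g(\xb)^T$, and one must argue that some choice of $v^\ast$ lands in $\hat N_{\Gr\widehat N_D}(g(\xb),\lb)$. Here I expect that routing everything through $b=h\circ g$ — where $\nabla b(\xb)$ is genuinely surjective and $\widehat N_\Theta(b(\xb))\ni\bar\mu$ is the unique multiplier — makes the surjectivity argument clean, analogous to how the uniqueness of $\eta$ was used in the proof of Theorem~\ref{ThGraphDer}; the passage from the $\Theta$-formula to the $D$-formula via $\bar\lambda=\nabla h(g(\xb))^T\bar\mu$ and the identity $\nabla^2\langle\bar\mu,b\rangle(\xb)=\nabla g(\xb)^T\nabla^2\langle\bar\mu,h\rangle(g(\xb))\nabla g(\xb)+\nabla^2\langle\lb,g\rangle(\xb)$ (already appearing in the earlier proof) will then be the routine bridge. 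One should also double-check that $\hat N_{\Gr\widehat N_D}$ on the right-hand side is taken at $(g(\xb),\lb)\in\Gr\widehat N_D$, which requires $\lb\in\widehat N_D(g(\xb))$ — guaranteed by \eqref{eq-3.55}.
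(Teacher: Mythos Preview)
Your proposal is correct and follows essentially the same route as the paper: first establish \eqref{EqRegNormalCone} in the surjective case (the paper does this by computing the polar of the cone $C=\{(u,\xi)\mid(\nabla g(\xb)u,\xi)\in T_{\Gr\hat N_D}(g(\xb),\lb)\}$ via \cite[Exercise~6.7]{RoWe98}, which is exactly your polar-of-the-tangent-cone argument), then apply the surjective case twice --- once for $\Gamma=b^{-1}(\Theta)$ and once for $D=h^{-1}(\Theta)$ --- and chain the two formulas using the Hessian identity $\nabla^2\langle\bar\mu,b\rangle(\xb)=\nabla g(\xb)^T\nabla^2\langle\bar\mu,h\rangle(g(\xb))\nabla g(\xb)+\nabla^2\langle\lb,g\rangle(\xb)$ that you already anticipated. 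One small slip: in your ``$\subset$'' computation the coefficient of $u$ should be $w^\ast+\nabla^2\langle\lb,g\rangle(\xb)w$, not $w^\ast-\nabla^2\langle\lb,g\rangle(\xb)w$ (so that $\nabla g(\xb)^Tv^\ast=w^\ast+\nabla^2\langle\lb,g\rangle(\xb)w$ matches the target formula).
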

 \begin{proof}
   First we justify \eqref{EqRegNormalCone} in the case when the derivative operator $\nabla
   g(\xb):\R^n\to\R^s$ is surjective. By the definition we have $(w^\ast,w)\in \hat N_{\Gr \hat
   N_\Gamma}(\xb,\xba)$ if and only if $\skalp{w^\ast,u}+\skalp{w,u^\ast}\leq 0$ $\forall (u,u^\ast)\in
   T_{\Gr \hat N_\Gamma}(\xb,\xba)$, which by virtue of Theorem \ref{ThGraphDer} is equivalent to the
   statement that $(0,0)$ is a global solution of the problem
   \begin{eqnarray*}
     \max_{u,\xi} &&\gamma(u,\xi):=\skalp{w^\ast,u}+\skalp{w,\nabla
     g(\xb)^T\xi+\nabla^2\skalp{\lb,g}(\xb)u}\\
     \mbox{subject to}&& (\nabla g(\xb)u,\xi)\in T_{\Gr \hat N_D}(g(\xb),\lb).
   \end{eqnarray*}
   Since the objective can be rewritten as
   $\gamma(u,\xi)=\skalp{w^\ast+\nabla^2\skalp{\lb,g}(\xb)w,u}+\skalp{\nabla g(\xb)w,\xi}$, this is in
   turn equivalent to the statement
   \[(w^\ast+\nabla^2\skalp{\lb,g}(\xb)w, \nabla g(\xb)w)\in C^\circ\]
   where $C:=\{(u,\xi)\mv (\nabla g(\xb)u,\xi)\in T_{\Gr \hat N_D}(g(\xb),\lb)\}$.
   By surjectivity of $\nabla g(\xb)$  the linear mapping $(u,\xi)\to (\nabla g(\xb)u,\xi)$ is
   surjective as well and we can apply \cite[Exercise 6.7]{RoWe98} to obtain
   \begin{eqnarray*}C^\circ&=&\hat N_C(0,0)=\{(\nabla g(\xb)^Tv^\ast,v)\mv(v^\ast,v)\in \hat N_{T_{\Gr
   \hat N_D}(g(\xb),\lb)}(0,0)\}\\
   &=&\{(\nabla g(\xb)^Tv^\ast,v)\mv(v^\ast,v)\in \hat N_{\Gr \hat N_D}(g(\xb),\lb)\}.\end{eqnarray*}
   Now formula \eqref{EqRegNormalCone} follows.

   It remains to replace the surjectivity of $\nabla g(\xb)$ by the weaker nondegeneracy assumption
   from (A2). To proceed, we employ the local representation of $D$ provided
    by its reducibility at $g(\xb)$, see assumption (A1). By Proposition \ref{PropBasic} we have
    $\Gamma\cap {\mathcal U}=\{x\in{\mathcal U}\mv b(x)\in \Theta\}$ and by assumption (A1) we have
    $D\cap{\mathcal V}=\{z\in {\mathcal V}\mv h(z)\in\Theta\}$, where $\mathcal U$ and $\mathcal V$
    denote  neighborhoods of $\xb$ and $g(\xb)$, respectively. Since both $\nabla b(\xb)$ and $\nabla
    h(g(\xb))$ are surjective, we can apply \eqref{EqRegNormalCone} twice to obtain
    \begin{equation}\label{EqNormalAux1}\hat N_{\Gr \hat N_\Gamma}(\xb,\xba)=\left\{(w^\ast,w)\mv
    \exists z^\ast: \begin{array}{l}(z^\ast, \nabla b(\xb)w)\in \hat N_{\Gr \hat
    N_\Theta}(b(\xb),\bar\mu),\\
     w^\ast= -\nabla^2\skalp{\bar\mu,b}(\xb)w +\nabla b(\xb)^Tz^\ast\end{array}\right\}\end{equation}
    and
    \begin{equation}\label{EqNormalAux2}\hat N_{\Gr \hat N_D}(g(\xb),\lb)=\left\{(v^\ast,v)\mv \exists
    z^\ast: \begin{array}{l}(z^\ast, \nabla h(g(\xb))v)\in \hat N_{\Gr \hat
    N_\Theta}(h(g(\xb)),\bar\mu),\\
     v^\ast= -\nabla^2\skalp{\bar\mu,h}(g(\xb))v +\nabla h(g(\xb))^Tz^\ast\end{array}\right\},
    \end{equation}
    where $\bar\mu$ is the unique multiplier satisfying $\lb=\nabla h(g(\xb))^T\bar \mu$. By the
    classical chain rule  we have $\nabla b(\xb)=\nabla h(g(\xb))\nabla g(\xb)$ and
    \begin{eqnarray*}\nabla^2\skalp{\bar\mu,b}(\xb)&=&\nabla
    g(\xb)^T\nabla^2\skalp{\bar\mu,h}(g(\xb))\nabla g(\xb) +\nabla^2\skalp{ \nabla h(g(\xb))^T\bar\mu,
    g}(\xb)\\
    &=&\nabla g(\xb)^T\nabla^2\skalp{\bar\mu,h}(g(\xb))\nabla g(\xb)+\nabla^2\skalp{\lb, g}(\xb).
    \end{eqnarray*}
    Now consider $(w^\ast,w)\in \hat N_{\Gr \hat N_\Gamma}(\xb,\xba)$ and let $z^\ast$ be chosen such
    that $(z^\ast, \nabla b(\xb)w)\in \hat N_{\Gr \hat N_\Theta}(b(\xb),\bar\mu)$ and $w^\ast=
    -\nabla^2\skalp{\bar\mu,b}(\xb)w +\nabla b(\xb)^Tz^\ast$. By substituting $v:=\nabla g(\xb) w$,
    $v^\ast:=-\nabla^2\skalp{\bar\mu,h}(g(\xb))q+\nabla h(g(\xb))^Tz^\ast$ we obtain $(z^\ast, \nabla
    h(g(\xb))v)\in \hat N_{\Gr \hat N_\Theta}(h(g(\xb)),\bar\mu)$ implying $(v^\ast,v)=(v^\ast, \nabla
    g(\xb) w )\in \hat N_{\Gr \hat N_D}(g(\xb),\lb)$ by \eqref{EqNormalAux2}
    and
    \begin{eqnarray*}w^\ast&=&-\nabla^2\skalp{\lb, g}(\xb)w+ \nabla
    g(\xb)^T\big(-\nabla^2\skalp{\bar\mu,h}(g(\xb))\nabla g(\xb)w+\nabla h(g(\xb))^Tz^\ast\big)\\
    &=&-\nabla^2\skalp{\lb, g}(\xb)w+ \nabla g(\xb)^Tv^\ast.
    \end{eqnarray*}
    Thus
    \[(w^\ast,w)\in {\cal N}:=\left\{(w^\ast,w)\mv \exists v^\ast:\begin{array}{l} (v^\ast, \nabla
    g(\xb)w)\in \hat N_{\Gr \hat N_D}(g(\xb),\lb),\\
    w^\ast= -\nabla^2\skalp{\lb,g}(\xb)w +\nabla g(\xb)^Tv^\ast\end{array}\right\}\]
    establishing the inclusion $\hat N_{\Gr \hat N_\Gamma}(\xb,\xba)\subset {\cal N}$. To establish the
    reverse inclusion consider $(w^\ast,w)\in{\cal N}$ together with the corresponding element
    $v^\ast$. By \eqref{EqNormalAux2} we can find some  $z^\ast$ such that $(z^\ast, \nabla
    h(g(\xb))\nabla g(\xb)w)=(z^\ast, \nabla b(\xb)w)\in \hat N_{\Gr \hat N_\Theta}(h(g(\xb)),\bar\mu)$
    and $v^\ast= -\nabla^2\skalp{\bar\mu,h}(g(\xb))\nabla g(\xb)w +\nabla h(g(\xb))^Tz^\ast$. Hence
    \begin{eqnarray*}w^\ast&=& -\nabla^2\skalp{\lb, g}(\xb)w+ \nabla g(\xb)^Tv^\ast\\
    &=&-(\nabla^2\skalp{\lb, g}(\xb)+\nabla g(\xb)^T\nabla^2\skalp{\bar\mu,h}(g(\xb))\nabla
    g(\xb))w+\nabla g(\xb)^T\nabla h(g(\xb))^Tz^\ast\\
    &=&-\nabla^2\skalp{\bar\mu,b}(\xb)w+\nabla b(\xb)^Tz^\ast
    \end{eqnarray*}
    and we conclude $(w^\ast,w)\in \hat N_{\Gr \hat N_\Gamma}(\xb,\xba)$ by \eqref{EqNormalAux1}. Hence
    $\hat N_{\Gr \hat N_\Gamma}(\xb,\xba)= {\cal N}$ and this finishes the proof.
 \qed\end{proof}

 By the definition of the regular coderivative we obtain the following Corollary.
 \begin{corollary}\label{CorRegCoderiv} Under the assumptions of Theorem \ref{ThRegNormalCone} one has
   \begin{equation}\label{EqRegCoderiv}
   \hat D^\ast \hat N_\Gamma(\xb,\xba)(w)=\nabla^2\skalp{\lb,g}(\xb)w+ \nabla g(\xb)^T \hat D^\ast\hat
   N_D(g(\xb),\lb)(\nabla g(\xb)w),\ w\in\R^n.
   \end{equation}
 \end{corollary}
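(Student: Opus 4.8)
The plan is to derive \eqref{EqRegCoderiv} directly from the characterization of the regular normal cone to $\Gr\hat N_\Gamma$ established in Theorem \ref{ThRegNormalCone} by unwinding the definition of the regular coderivative. First I would recall that, by Definition \ref{DefGenDeriv}(ii), a vector $w^\ast\in\R^n$ belongs to $\hat D^\ast\hat N_\Gamma(\xb,\xba)(w)$ precisely when $(w^\ast,-w)\in\hat N_{\Gr\hat N_\Gamma}(\xb,\xba)$. Applying \eqref{EqRegNormalCone} with the pair $(w^\ast,-w)$ in place of $(w^\ast,w)$, this membership is equivalent to the existence of $v^\ast\in\R^s$ satisfying $(v^\ast,-\nabla g(\xb)w)\in\hat N_{\Gr\hat N_D}(g(\xb),\lb)$ together with $w^\ast=\nabla^2\skalp{\lb,g}(\xb)w+\nabla g(\xb)^Tv^\ast$.

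Next I would translate the first of these two conditions back into coderivative language: by Definition \ref{DefGenDeriv}(ii) applied to $\hat N_D$ at $(g(\xb),\lb)$, the inclusion $(v^\ast,-\nabla g(\xb)w)\in\hat N_{\Gr\hat N_D}(g(\xb),\lb)$ says exactly that $v^\ast\in\hat D^\ast\hat N_D(g(\xb),\lb)(\nabla g(\xb)w)$. Substituting this description of the admissible $v^\ast$ into the formula $w^\ast=\nabla^2\skalp{\lb,g}(\xb)w+\nabla g(\xb)^Tv^\ast$ and letting $v^\ast$ range over the whole set $\hat D^\ast\hat N_D(g(\xb),\lb)(\nabla g(\xb)w)$ yields
\[
\hat D^\ast\hat N_\Gamma(\xb,\xba)(w)=\nabla^2\skalp{\lb,g}(\xb)w+\nabla g(\xb)^T\hat D^\ast\hat N_D(g(\xb),\lb)(\nabla g(\xb)w),
\]
which is precisely \eqref{EqRegCoderiv}. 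The set-valued equality is understood in the usual Minkowski sense, the left-hand side being empty exactly when the right-hand side is.

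This argument is essentially a bookkeeping exercise: there is no real obstacle, since all the analytic content—the surjectivity reductions, the use of \cite[Exercise 6.7]{RoWe98}, the chain-rule computation of $\nabla^2\skalp{\bar\mu,b}(\xb)$—has already been absorbed into Theorem \ref{ThRegNormalCone}. The only point requiring a modicum of care is the sign convention in the definition of the coderivative: one must consistently feed the pair $(w^\ast,-w)$ into \eqref{EqRegNormalCone} and, symmetrically, read off $v^\ast\in\hat D^\ast\hat N_D(g(\xb),\lb)(\nabla g(\xb)w)$ from the condition $(v^\ast,-\nabla g(\xb)w)\in\hat N_{\Gr\hat N_D}(g(\xb),\lb)$, so that the two sign flips match up and the argument $\nabla g(\xb)w$ of the inner coderivative comes out with the correct sign. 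Once this is observed, the corollary is immediate.
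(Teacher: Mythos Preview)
Your proposal is correct and follows exactly the approach the paper indicates: the corollary is obtained from Theorem~\ref{ThRegNormalCone} simply by unwinding Definition~\ref{DefGenDeriv}(ii), with the two sign flips matching as you describe. The paper itself offers no more than the sentence ``By the definition of the regular coderivative we obtain the following Corollary,'' so your argument is precisely the intended one.
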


 In order to show the following result on the directional limiting coderivative note that assumptions
 (A1) and (A2) hold for all $x\in\Gamma$ near $\xb$. In fact, by taking into account Proposition
 \ref{PropBasic} and its proof, we have that $\nabla h(g(x)$ and $\nabla b(x)$ are surjective for all
 $x$ near $\xb$ and the latter is equivalent with validity of the condition $\range \nabla g(x)+\ker
 \nabla h(g(x))=\R^n$ for those $x$.

\begin{theorem}\label{ThDirLimCoderiv}
 Let assumptions (A1), (A2) be fulfilled, $\bar{x}^{*}\in \hat{N}_{\Gamma}(\bar{x})$ and
 $\bar{\lambda}$ be the (unique) multiplier satisfying (\ref{eq-3.55}). Further we are given a pair of
 directions $(u,u^{*})\in \newline
  T_{\Gr\hat N_\Gamma}(\xb,\xba)$. Then for any $w\in \mathbb{R}^{n}$
\begin{eqnarray}\label{eq-3.13}
\lefteqn{D^{*}\hat{N}_{\Gamma}((\xb,\xba); (u,u^{*}))(w)}\\
\nonumber& = & \nabla^{2}\skalp{\lb, g} (x)w +
  \nabla g(\bar{x})^{T}D^{*}\hat{N}_{D}((g(\bar{x}),\bar{\lambda}); (\nabla g(\bar{x})u, \bar{\xi}))(\nabla g
  (\bar{x})w),
\end{eqnarray}
where $\bar{\xi}\in \mathbb{R}^{s}$ is the (unique) vector satisfying the relations
\begin{equation}\label{eq-3.14}
(\nabla g(\bar{x})u, \bar{\xi})\in T_{\Gr \hat{N}_{D}}(g(\bar{x}), \bar{\lambda}),\; u^{*}=
\nabla g (\bar{x})^{T}\bar{\xi}+\nabla^{2}\skalp{\bar{\lambda},g} (\bar{x})u.
\end{equation}
\end{theorem}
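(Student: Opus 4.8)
The plan is to mimic the structure of the proof of Theorem~\ref{ThRegNormalCone}: first establish \eqref{eq-3.13} under the extra hypothesis that $\nabla g(\xb)$ is surjective, and then bootstrap to the general nondegenerate case via the reduction $b=h\circ g$ together with the fact, noted just before the statement, that assumptions (A1), (A2) persist for all $x\in\Gamma$ near $\xb$. In the surjective case, I would unwind the definition of the directional limiting coderivative: $(w^\ast,w)\in D^\ast\hat N_\Gamma((\xb,\xba);(u,u^\ast))(w)$ means $(w^\ast,-w)\in N_{\Gr\hat N_\Gamma}((\xb,\xba);(u,u^\ast))$, i.e.\ there are $t_k\searrow 0$, $(u_k,u_k^\ast)\to(u,u^\ast)$ and $(w_k^\ast,-w_k)\to(w^\ast,-w)$ with $(w_k^\ast,-w_k)\in\hat N_{\Gr\hat N_\Gamma}(\xb+t_ku_k,\xba+t_ku_k^\ast)$. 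The key point is that along this sequence one still has surjectivity of $\nabla g(\xb+t_ku_k)$ for $k$ large, so Theorem~\ref{ThRegNormalCone} (applied at the shifted base points, not at $(\xb,\xba)$) gives, for each $k$, a vector $v_k^\ast$ with
\[
(v_k^\ast,\nabla g(x_k)w_k)\in\hat N_{\Gr\hat N_D}(g(x_k),\lambda_k),\qquad
w_k^\ast=-\nabla^2\skalp{\lambda_k,g}(x_k)w_k+\nabla g(x_k)^Tv_k^\ast,
\]
where $x_k=\xb+t_ku_k$ and $\lambda_k$ is the unique multiplier with $\xba+t_ku_k^\ast=\nabla g(x_k)^T\lambda_k$.

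Next I would show the sequence $v_k^\ast$ is bounded. From the Remark after Theorem~\ref{ThGraphDer} and the uniqueness statement in its proof, $\lambda_k\to\lb$ and $(\lambda_k-\lb)/t_k\to\bar\xi$, the unique vector of \eqref{eq-3.14}; hence $(g(x_k),\lambda_k)=(g(\xb),\lb)+t_k(\nabla g(\xb)u+\oo(1),\bar\xi+\oo(1))$, so the base points converge to $(g(\xb),\lb)$ from direction $(\nabla g(\xb)u,\bar\xi)$. Using $w_k^\ast=-\nabla^2\skalp{\lambda_k,g}(x_k)w_k+\nabla g(x_k)^Tv_k^\ast$ and the surjectivity of $\nabla g(\xb)$ (a right inverse of $\nabla g(x_k)$ exists with uniformly bounded norm), boundedness of $w_k^\ast$ and $w_k$ forces boundedness of the component of $v_k^\ast$ in $(\ker\nabla g(\xb)^T)^\perp=\range\nabla g(\xb)$; but from $(v_k^\ast,\nabla g(x_k)w_k)\in\hat N_{\Gr\hat N_D}(g(x_k),\lambda_k)$ one sees (via the reduction of $D$ to $\Theta$, exactly as in Proposition~\ref{PropBasic} and Theorem~\ref{ThRegNormalCone}) that $v_k^\ast=\nabla h(g(x_k))^T(\cdot)\in\range\nabla h(g(x_k))^T$, which under (A2) intersects $\ker\nabla g(\xb)^T$ only in $\{0\}$ in a uniform way, so the full $v_k^\ast$ is bounded. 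Passing to a subsequence, $v_k^\ast\to v^\ast$, and in the limit $(v^\ast,\nabla g(\xb)w)\in N_{\Gr\hat N_D}((g(\xb),\lb);(\nabla g(\xb)u,\bar\xi))$ and $w^\ast=-\nabla^2\skalp{\lb,g}(\xb)w+\nabla g(\xb)^Tv^\ast$, i.e.\ $(w^\ast,w)$ lies in the right-hand side of \eqref{eq-3.13}. The reverse inclusion is obtained by running the construction backwards: starting from $v^\ast$ realizing the directional limiting normal cone to $\Gr\hat N_D$, pick approximating sequences, use surjectivity of $\nabla g(\xb)$ to solve $g(x_k)=g(\xb)+t_kv_k$ for $x_k$ close to $\xb+t_ku_k$ as in the second half of the proof of Theorem~\ref{ThGraphDer}, and invoke the converse direction of Theorem~\ref{ThRegNormalCone} at the shifted points to produce $(w_k^\ast,-w_k)\in\hat N_{\Gr\hat N_\Gamma}(x_k,\xba+t_ku_k^\ast)$ converging as required.

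Finally, to remove the surjectivity assumption I would repeat the $b=h\circ g$ argument from Theorem~\ref{ThRegNormalCone}: since $\nabla b(\xb)$ and $\nabla h(g(\xb))$ are surjective, the surjective case applies to the pair $(\Gamma,\Theta)$ through $b$ and to the pair $(D,\Theta)$ through $h$, giving directional analogues of \eqref{EqNormalAux1}, \eqref{EqNormalAux2}; one then matches base directions using $(u,u^\ast)\in T_{\Gr\hat N_\Gamma}(\xb,\xba)$, which by Theorem~\ref{ThGraphDer} (with $\widehat N$ replaced by $N$ per the Remark) corresponds to a direction $(\nabla b(\xb)u,\cdot)\in T_{\Gr\hat N_\Theta}(b(\xb),\bar\mu)$, and combines the two formulas using the classical chain rule identities $\nabla b(\xb)=\nabla h(g(\xb))\nabla g(\xb)$ and $\nabla^2\skalp{\bar\mu,b}(\xb)=\nabla g(\xb)^T\nabla^2\skalp{\bar\mu,h}(g(\xb))\nabla g(\xb)+\nabla^2\skalp{\lb,g}(\xb)$ together with the substitution $v:=\nabla g(\xb)w$ — precisely as in the proof of Theorem~\ref{ThRegNormalCone}. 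Uniqueness of $\bar\xi$ is already guaranteed by the Remark following Theorem~\ref{ThGraphDer}. I expect the main obstacle to be the boundedness argument for $v_k^\ast$ in the surjective case: one must carefully exploit that $v_k^\ast$ lives in the range of $\nabla h(g(x_k))^T$ and that (A2) makes the angle between $\range\nabla h(g(x_k))^T$ and $\ker\nabla g(x_k)^T$ bounded away from degeneracy uniformly in $k$, so that control of $\nabla g(x_k)^Tv_k^\ast$ yields control of $v_k^\ast$ itself; everything else is a direction-tracking bookkeeping exercise parallel to the earlier proofs.
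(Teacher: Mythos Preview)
Your approach is correct but more roundabout than the paper's. The paper does not split into a surjective case followed by a bootstrap via $b=h\circ g$; instead it observes that, since (A1), (A2) persist for all $x\in\Gamma$ near $\xb$, Corollary~\ref{CorRegCoderiv} applies directly at every shifted base point $(x_k,x_k^\ast)=(\xb+\vartheta_k u_k,\xba+\vartheta_k u_k^\ast)$ without any surjectivity hypothesis on $\nabla g$. This yields, for each $k$, the identity
\[
\hat D^\ast\hat N_\Gamma(x_k,x_k^\ast)(w_k)=\nabla^2\skalp{\lambda_k,g}(x_k)w_k+\nabla g(x_k)^T\hat D^\ast\hat N_D(g(x_k),\lambda_k)(\nabla g(x_k)w_k)
\]
with the unique multiplier $\lambda_k$; writing $g(x_k)=g(\xb)+\vartheta_k h_k$, $\lambda_k=\lb+\vartheta_k\xi_k$ and invoking the argument from the proof of Theorem~\ref{ThGraphDer} to obtain $h_k\to\nabla g(\xb)u$ and $\xi_k\to\bar\xi$, the paper then simply takes outer set limits on both sides. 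In short, the chain-rule reduction through $b$ has already been carried out once inside Theorem~\ref{ThRegNormalCone} and need not be repeated at the directional level, so your two-stage plan reproduces work that is already available.

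Regarding what you flag as the ``main obstacle'': in your surjective case $\ker\nabla g(\xb)^T=\{0\}$, hence $\nabla g(x_k)^T$ is injective with a uniformly bounded left inverse and the boundedness of $v_k^\ast$ is immediate from $w_k^\ast=-\nabla^2\skalp{\lambda_k,g}(x_k)w_k+\nabla g(x_k)^Tv_k^\ast$; the detour through $v_k^\ast\in\range\nabla h(g(x_k))^T$ and (A2) is superfluous there. In the paper's direct approach the corresponding boundedness issue (needed to pass to the outer limit on the right-hand side when $\nabla g(\xb)^T$ is not injective) is genuine but left implicit; it is resolved by the same mechanism you sketch, namely that via \eqref{EqNormalAux2} each $\eta_k\in\hat D^\ast\hat N_D(g(x_k),\lambda_k)(\nabla g(x_k)w_k)$ has the form $-\nabla^2\skalp{\mu_k,h}(g(x_k))\nabla g(x_k)w_k+\nabla h(g(x_k))^T\zeta_k$, so convergence of $\nabla g(x_k)^T\eta_k$ forces convergence of $\nabla b(x_k)^T\zeta_k$, and the surjectivity of $\nabla b(\xb)$ bounds $\zeta_k$ and hence $\eta_k$.
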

\begin{proof}
In the first step we observe that for arbitrary sequences
 $
 \vartheta_{k}\searrow 0, u_{k}\rightarrow u, u^{*}_{k}\rightarrow  u^{*} \mbox{ and } w_k\rightarrow w
 \mbox{ such that } (x_{k}, x^{*}_{k}):=
(\bar{x}+\vartheta_{k}u_{k}, \bar{x}^{*}_{k}+ \vartheta_{k} u^{*}_{k}) \in \Gr \hat{N}_{\Gamma}$ and
$k$ sufficiently large one has 
\[
\hat{D}^{*}\hat{N}_{\Gamma} (x_{k},x^{*}_{k})(w_k)=\nabla^{2}\skalp{\lambda_{k},g}(x_{k})w_k+
\nabla g(x_{k})^{T}\hat{D}^{*}\hat{N}_{D}(g(x_{k}),\lambda_{k})(\nabla g(x_{k})w_k),
\]
where $\lambda_{k}$ is the (unique) multiplier satisfying the relations
\begin{equation}\label{eq-3.15}
\nabla g(x_{k})^{T}\lambda_{k} = x^{*}_{k}, \,\, \lambda_{k}\in \hat{N}_{D}(g(x_{k})).
\end{equation}
Indeed, this follows immediately from Corollary \ref{CorRegCoderiv} due to the mentioned robustness
of assumptions (A1), (A2). Moreover, we know that $\lambda_{k} \rightarrow \bar{\lambda}$ which is the
unique multiplier satisfying \eqref{eq-3.55}.
 Next we observe that
\[g(x_{k})=g(\bar{x}) +\vartheta_{k}h_{k} \mbox{ with } h_{k}=\frac{g(x_{k})-g(\bar{x})}{\vartheta_{k}}
\rightarrow \nabla g(\bar{x})u\]
 and\\
$$\lambda_{k}= \bar{\lambda}+\vartheta_{k}\xi_{k} \mbox{ with } \xi_{k}=
\frac{\lambda_{k}-\bar{\lambda}}{\vartheta_{k}}.$$
 It follows that
\begin{eqnarray}\label{eq-3.17}
\lefteqn{\hat{D}^{*}\hat{N}_{\Gamma}(\bar{x}+\vartheta_{k} u_{k}, \bar{x}^{*}+\vartheta_{k}
u^{*}_{k})(w_k)}\\
\nonumber&=& \nabla^2\skalp{ \lambda_k,g }(x_{k})w_k + \nabla
g(x_{k})^{T}\hat{D}^{*}\hat{N}_{D}(g(\bar{x})+
\vartheta_{k}h_{k}, \bar{\lambda}+\vartheta_{k}\xi_{k})(\nabla g(x_{k})w_k).
\end{eqnarray}
 We may now use the argumentation from the proof of Theorem \ref{ThGraphDer} to show that $\xi_{k}$
converges to the unique $\bar{\xi}$ satisfying (\ref{eq-3.14}). Taking now the outer set limits for $k
\rightarrow \infty$ on both sides of (\ref{eq-3.17}), we obtain that $w^\ast \in D^{*}
\hat{N}_{\Gamma}((\bar{x},\bar{x}^{*}); (u,u^{*}))(w)$
if and only if it admits the representation
 $$w^\ast \in \nabla^{2}\langle \bar{\lambda},g\rangle
(\bar{x})w+ \nabla g(\bar{x})^{T}D^{*}\hat{N}_{D} ((g(\bar{x}),\bar{\lambda});(\nabla g(\bar{x})u,
\bar{\xi}))(\nabla g(\bar{x})w)$$
with $\bar{\lambda}$ and $\bar{\xi}$ specified above.
\qed\end{proof}
\begin{remark}
  Setting $(u,u^\ast)=(0,0)$, we recover in this way the formula
  \begin{eqnarray*}
D^{*}\hat{N}_{\Gamma}(\xb,\xba)(w)
\nonumber& = & \nabla^{2}\skalp{\lb, g} (x)w +
  \nabla g(\bar{x})^{T}D^{*}\hat{N}_{D}(g(\bar{x}),\bar{\lambda})(\nabla g (\bar{x})w),
\end{eqnarray*}
which has been derived in \cite{OR11} under the standard reducibility and nondegeneracy assumptions
from \cite{BS}. This formula thus holds also under the weakened assumptions (A1), (A2).
\end{remark}
Under the additional assumptions, mentioned in Section 3.1, relations (\ref{eq-3.14}) can be
simplified. In particular, under the PDC condition at $g(\bar{x})$, the first relation from
(\ref{eq-3.14}) reduces to (\ref{eq-3.8}) (with $\xi$ replaced by $\bar{\xi}$).


\section{Main results}
On the basis of Theorems \ref{ThImplMapping}, \ref{ThGraphDer} and \ref{ThDirLimCoderiv} we may now
state our main result - a new criterion for the Aubin of the solution map of a variational system,
given by (\ref{eq-99}), (\ref{eq-104})  around a specified  reference point. \\

\begin{theorem}\label{ThMain}

Let $0\in M(\bar{p},\bar{x})$ with $M$ specified by (\ref{eq-104}),   the assumptions (A1), (A2) be
fulfilled and let $\bar{\lambda}$ be the (unique)  multiplier satisfying (\ref{eq-3.55}) with $\xba=
-H(\bar{p},\bar{x})$. Further assume that 
\begin{enumerate}
 \item [(i)]
 for any $q \in \mathbb{R}^{l}$ the variational system
\begin{equation}\label{eq-4.1}
\begin{split}
0=\nabla_{p}H(\bar{p},\bar{x})q + \nabla_{x}\mathcal{L}(\bar{p},\bar{x},\bar{\lambda})u + \nabla g
(\bar{x})^{T}\xi\\ (\nabla g(\bar{x})u, \xi)\in T_{\Gr \hat{N}_{D}}(g(\bar{x}),\bar{\lambda})
\end{split}
\end{equation}
has a solution $(u,\xi)\in \mathbb{R}^{n}\times \mathbb{R}^{l}$;
\item [(ii)]
$M$ is metrically subregular at $(\bar{p},\bar{x})$, and
\item [(iii)]
for any nonzero $(q,u)$ satisfying (with a corresponding unique  $\xi$) relations (\ref{eq-4.1})
one has the implication 
\begin{equation}\label{eq-4.2}
\begin{split}
0 \in \nabla_{x}\mathcal{L}(\bar{p},\bar{x}, & \bar{\lambda})^{T} v^{*}+ \nabla
g(\bar{x})^{T}D^{*}\hat{N}_{D}((g(\bar{x}), \bar{\lambda}); (\nabla g(\bar{x}), \xi)) (\nabla
g(\bar{x})v^{*})\\ & \Rightarrow v^{*}\in \ker \nabla_{p}H(\bar{p},\bar{x})^{T}.
\end{split}
\end{equation}
\end{enumerate}
Then the respective $S$ has the Aubin property around $(\bar{p},\bar{x})$ and for any
$q \in \mathbb{R}^{l}$
\begin{equation}\label{eq-4.3}
\begin{split}
DS(\bar{p},\bar{x})(q)=\{u | \exists \xi  & : (\nabla g (\bar{x})u,\xi) \in T_{\Gr \hat{N}_{D}}
(g(\bar{x}),\bar{\lambda}),\\ 0 & = \nabla_{p}H(\bar{p},\bar{x})q + \nabla_{x}
\mathcal{L}(\bar{p},\bar{x},\bar{\lambda})^{T}u + \nabla g(\bar{x})^{T}\xi\}.
\end{split}
\end{equation}
The above assertions remain true provided assumptions (ii), (iii) are replaced by
\begin{enumerate}
\item [(iv)]
for any nonzero $(q,u)$ satisfying (with a corresponding unique $\xi$) relations (\ref{eq-4.1}) one
has the implication
\begin{equation}\label{eq-4.4}
\begin{split}
0 \in \nabla_{x}\mathcal{L}(\bar{p},\bar{x},  \bar{\lambda})^{T} v^{*}+ \nabla
g(\bar{x})^{T}D^{*}\hat{N}_{D}((g(\bar{x}), \bar{\lambda})& ;(\nabla g(\bar{x})u, \xi))
(\nabla g(\bar{x})v^{*})\\ & \Rightarrow v^{*} = 0.
\end{split}
\end{equation}
\end{enumerate}
\end{theorem}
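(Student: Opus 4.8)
The plan is to derive Theorem~\ref{ThMain} directly from Theorem~\ref{ThImplMapping} applied to the multifunction $M$ of the form \eqref{eq-104}, by translating the abstract conditions (i)--(iv) of Theorem~\ref{ThImplMapping} into the concrete ones using the chain rules from Section~3 together with the formulas \eqref{eq-106}, \eqref{eq-107} for $DM$ and $D^{*}M$. So the first step is to write $\xba:=-H(\pb,\xb)$, note that $0\in M(\pb,\xb)$ means precisely $\xba\in\hat N_\Gamma(\xb)$, and invoke Proposition~\ref{PropBasic} and the Remark after it to get the unique multiplier $\lb$ satisfying \eqref{eq-3.55}. Then I would substitute the graphical-derivative formula \eqref{eq-3.6} (specialized via $\nabla b$) into \eqref{eq-106}: a pair $(q,u)$ satisfies $0\in DM(\pb,\xb,0)(q,u)$ iff there is $\xi$ with $(\nabla g(\xb)u,\xi)\in T_{\Gr\hat N_D}(g(\xb),\lb)$ and
\[
0=\nabla_pH(\pb,\xb)q+\nabla_xH(\pb,\xb)u+\nabla g(\xb)^T\xi+\nabla^2\skalp{\lb,g}(\xb)u,
\]
which, recognizing $\nabla_xH(\pb,\xb)u+\nabla^2\skalp{\lb,g}(\xb)u=\nabla_x\Lag(\pb,\xb,\lb)u$ by the definition of the Lagrangian, is exactly system \eqref{eq-4.1}. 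Hence condition (i) of Theorem~\ref{ThMain} is equivalent to \eqref{eq-100}, and the uniqueness of $\xi$ is the one recorded in the Remark after Theorem~\ref{ThGraphDer}. This also identifies \eqref{eq-4.3} with \eqref{eq-102}.

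Next I would treat the coderivative conditions. Using \eqref{eq-107} together with Theorem~\ref{ThDirLimCoderiv}, I would compute $D^{*}M((\pb,\xb,0);(q,u,0))(v^{*})$ for $(q,u)$ satisfying \eqref{eq-4.1} and the corresponding unique $\xi$: the direction fed into $D^{*}\hat N_\Gamma$ is $(u,u^{*})$ with $u^{*}=-\nabla_pH(\pb,\xb)q-\nabla_xH(\pb,\xb)u=\nabla g(\xb)^T\xi+\nabla^2\skalp{\lb,g}(\xb)u$, so indeed $u^{*}\in D\hat N_\Gamma(\xb,\xba)(u)$ and \eqref{eq-3.13} applies, producing the associated vector $\bar\xi$ which coincides with $\xi$ (by the second relation in \eqref{eq-3.14} versus \eqref{eq-4.1} and uniqueness). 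Plugging \eqref{eq-3.13} into \eqref{eq-107} gives
\[
D^{*}M(\cdots)(v^{*})=
\begin{bmatrix}\nabla_pH(\pb,\xb)^Tv^{*}\\[0.5ex]
\nabla_x\Lag(\pb,\xb,\lb)^Tv^{*}+\nabla g(\xb)^TD^{*}\hat N_D((g(\xb),\lb);(\nabla g(\xb)u,\xi))(\nabla g(\xb)v^{*})\end{bmatrix}.
\]
Therefore "$(q^{*},0)\in D^{*}M((\pb,\xb,0);(q,u,0))(v^{*})$" means $q^{*}=\nabla_pH(\pb,\xb)^Tv^{*}$ and $0$ lies in the second component — which is precisely the left-hand side of \eqref{eq-4.2} (and of \eqref{eq-4.4}). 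Then $q^{*}=0$ is equivalent to $v^{*}\in\ker\nabla_pH(\pb,\xb)^T$, so implication \eqref{eq-101} becomes \eqref{eq-4.2}, and implication \eqref{eq-103} becomes \eqref{eq-4.4} (the extra $v^{*}=0$ forces $q^{*}=0$ automatically). Condition (ii) of Theorem~\ref{ThMain} is literally condition (ii) of Theorem~\ref{ThImplMapping}. So all hypotheses match and the conclusions — the Aubin property of $S$ and the formula for $DS$ — are transferred verbatim.

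The steps are mostly bookkeeping, but the place that requires genuine care — and which I expect to be the main obstacle — is justifying that the directional object $\bar\xi$ appearing in Theorem~\ref{ThDirLimCoderiv} is the same $\xi$ that appears in \eqref{eq-4.1}, and more generally that the robustness of (A1), (A2) along $\Gamma$ near $\xb$ (used inside the proof of Theorem~\ref{ThDirLimCoderiv}) is not disturbed by the presence of the parameter $p$ and the map $H$. Since $H$ is only continuously differentiable while the chain rules need $g$ twice continuously differentiable, one must check that the second-order terms from $H$ enter only through the first derivative $\nabla_xH(\pb,\xb)$ in $DM$ and through $\nabla_pH(\pb,\xb)^T$, $\nabla_xH(\pb,\xb)^T$ in $D^{*}M$ — this is exactly what \eqref{eq-106}, \eqref{eq-107} assert (citing \cite[Exercise~10.43]{RoWe98} and \cite[Theorem~2.10]{GO3}), so no additional smoothness of $H$ is needed. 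A secondary point to verify is that "nonzero $(q,u)$" in Theorem~\ref{ThMain} matches "nonzero $(q,u)$ with $0\in DM(\pb,\xb,0)(q,u)$" in Theorem~\ref{ThImplMapping}: this is immediate once (i) has been shown to be the translation of \eqref{eq-100}. Assembling these observations, the proof reduces to: verify (i)$\Leftrightarrow$\eqref{eq-100}, (ii)$\Leftrightarrow$(ii), (iii)$\Leftrightarrow$\eqref{eq-101}, (iv)$\Leftrightarrow$\eqref{eq-103}, then quote Theorem~\ref{ThImplMapping}. \qed
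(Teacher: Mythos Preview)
Your proposal is correct and follows exactly the route the paper takes: the paper's own proof is the single sentence ``The proof follows easily from Theorems \ref{ThImplMapping}, \ref{ThGraphDer} and \ref{ThDirLimCoderiv} and relations (\ref{eq-106}), (\ref{eq-107}),'' and you have spelled out precisely those substitutions, including the identification of $\bar\xi$ with $\xi$ via uniqueness and the recognition that $\nabla_xH+\nabla^2\skalp{\lb,g}=\nabla_x\Lag$.
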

 The proof follows easily from Theorems \ref{ThImplMapping}, \ref{ThGraphDer} and
\ref{ThDirLimCoderiv} and relations (\ref{eq-106}), (\ref{eq-107}). By imposing the additional
assumptions, mentioned in Section 3.1, formulas (\ref{eq-4.1}) and (\ref{eq-4.3}) can be appropriately
simplified. In particular, when $D$ is convex polyhedral, then (\ref{eq-4.1}) attains the form of the
generalized equation (GE) 
\begin{equation}\label{eq-4.5}
0 = \nabla_{p}H(\bar{p},\bar{x})q + \nabla_{x}\mathcal{L}(\bar{p},\bar{x},\bar{\lambda})u + \nabla g
(\bar{x})^{T}\xi, ~ \xi \in N_{K}(\nabla g(\bar{x})u).
\end{equation}
Denoting now $w :=(q,u)$ and $\Lambda := \mathbb{R}^{l} \times (\nabla g(\bar{x}))^{-1}K$,
(\ref{eq-4.5}) amounts to  the homogenous affine variational inequality
\begin{equation}\label{eq-110}
0 \in \left[ \begin{array}{cc} 0 & 0\\
\nabla_{p}H(\bar{p},\bar{x}), & \nabla_{x}\mathcal{L}(\bar{p},\bar{x},\bar{\lambda})
\end{array} \right] w + N_{\Lambda}(w).
\end{equation}
Indeed, thanks to the polyhedrality of $D$, $K$ is also polyhedral and
\[
N_{\Lambda}(w)=N_{\mathbb{R}^{l}}(q) \times \nabla g(\bar{x})^{T} N_{K}(\nabla g(\bar{x})u)
\]
 without any qualification conditions. For the solution of (\ref{eq-110}) various methods are available,
cf. \cite{FP}. This case will now be illustrated by an academic example.
\begin{example}\label{ExOne}
Consider the solution map $S:\mathbb{R} \rightrightarrows  \mathbb{R}^{2}$ of the GE 
\begin{equation}\label{eq-4.6}
0 \in M(p,x)=\left[
\begin{array}{l}
x_{1} - p\\ -x_{2}+x^{2}_{2}
\end{array}\right] + \hat{N}_{\Gamma}(x)
\end{equation}
with $\Gamma$ given by $D = \mathbb{R}^{2}_{-}$ and
\[
g(x)=\left[
\begin{array}{l}
g_{1} (x)\\ g_{2}(x)
\end{array}\right] =
\left[\begin{array}{l}
0.5 x_{1}-0.5 x^{2}_{1} - x_{2}\\ 0.5 x_{1} - 0.5x^{2}_{1} + x_{2}
\end{array}\right].
\]
Clearly, $\Gamma$ is a nonconvex set depicted in Fig,1. Let $(\bar{p},\bar{x})=(0,(0,0))$ be the
reference point. Since $\Gamma$ fulfills LICQ at $\bar{x}$, we conclude that assumptions (A1), (A2) are
fulfilled. Clearly, $x^{*}= -H(\bar{p}, \bar{x}) = (0,0)$ and $\bar{\lambda} = (0,0)$ as well. By
virtue of the polyhedrality of $D$ the variational system (\ref{eq-4.1}) attains the form
(\ref{eq-4.5}). In our case it amounts to
\begin{equation}\label{eq-4.7}
0 = \left[
\begin{array}{r}
-q\\ 0
\end{array}\right]  +
\left[
\begin{array}{r}
u_{1}\\ -u_{2}
\end{array}\right]  +
\left[
\begin{array}{rc}
0.5 & 0.5\\ -1 & 1
\end{array}\right]\xi, ~
\xi \in N_{\mathbb{R}^{2}_{-}}
\left( \left[
\begin{array}{l}
0.5 u_{1} - u_{2}\\ 0.5 u_{1} + u_{2}
\end{array}\right]\right),
\end{equation}
 because $K= T_{D}(g(\bar{x}))\cap [\bar{\lambda}]^{\perp} =D$.

\begin{figure}[!ht]\label{figure1}
\begin{center}
\input{gamma.tex}
\end{center}
\caption{Set $\Gamma$.}
\end{figure}

It is not difficult to compute that for $q \leq 0$ one has three different solutions $(u,\xi)$ of
(\ref{eq-4.7}), namely 
\begin{align}
& u_{1}= q,~ u_{2}=0,~ \xi_{1}=0,~ \xi_{2}=0 \label{eq-4.8}\\ & u_{1}= \frac{4}{3}q,~ u_{2}=
-\frac{2}{3}q,~ \xi_{1}=0,~ \xi_{2}=-\frac{2}{3}q \label{eq-4.9}\\ & u_{1}= \frac{4}{3}q,~
u_{2}=\frac{2}{3}q,~ \xi_{1}= -\frac{2}{3}q,~ \xi_{2}= 0, \label{eq-4.10}
\end{align}
and for $q \geq 0$ we have the unique solution
\begin{align}
 u_{1}= u_{2} =0,~ \xi_{1}= \xi_{2}= q.  \label{eq-4.11}
\end{align}
So, assumption (i) of Theorem \ref{ThMain} is fulfilled and we know the critical directions
$(q,u)\neq 0$ for which the implication (\ref{eq-4.4}) will be examined. Starting with (\ref{eq-4.8}),
one has $\nabla g(\bar{x})u=(0.5q, 0.5q)$ and
\[
\begin{array}{l}
D^{*}N_{\mathbb{R}^{2}_{-}}
\left(\left[ \begin{array}{l}
0\\ 0
\end{array}\right],
\left[ \begin{array}{l}
0\\ 0
\end{array}\right]\right);
\left(\left[ \begin{array}{l}
0.5q\\ 0.5q
\end{array}\right],
\left[ \begin{array}{l}
0\\ 0
\end{array}\right]\right)
\left(\left[ \begin{array}{l}
0.5 v^{*}_{1}-v^{*}_{2}\\ 0.5 v^{*}_{1}+v^{*}_{2}
\end{array}\right]\right)=\\
\\
\left[ \begin{array}{l}
D^{*}N_{\mathbb{R}_{-}} ((0,0); (0.5q, 0))(0.5 v^{*}_{1}-v^{*}_{2})\\ D^{*}N_{\mathbb{R}_{-}} ((0,0);
(0.5q, 0))(0.5 v^{*}_{1}+v^{*}_{2})
\end{array}\right]=
\left[ \begin{array}{l}
0\\ 0
\end{array}\right]
\end{array}
\]
 by virtue of the definition and \cite[Proposition 6.41]{RoWe98}. The left-hand side of
(\ref{eq-4.4}) reduces to the linear system in variables $(v^{*},\eta)\in \mathbb{R}^{2}\times
\mathbb{R}^{2}$
\[
0=
\left[
\begin{array}{r}
v^{*}_{1}\\ -v^{*}_{2}
\end{array}\right]  +
\left[
\begin{array}{rr}
0.5 & 0.5\\ -1 & 1
\end{array}\right]\eta, ~ \eta = 0,
\]
verifying the validity of implication (\ref{eq-4.4}). In the case (\ref{eq-4.9}),
 $\nabla
g(\bar{x})u=(\frac{4}{3}q,0)$ and
\[
D^{*}N_{\mathbb{R}^{2}_{-}}
\left(\left(\left[ \begin{array}{l}
0\\ 0
\end{array}\right],
\left[ \begin{array}{l}
0\\ 0
\end{array}\right]\right);
\left(\left[ \begin{array}{c}
\frac{4}{3}q\\
0
\end{array}\right],
\left[ \begin{array}{c}
0\\ -\frac{2}{3}q
\end{array}\right]\right)\right)
\left(\left[ \begin{array}{l}
0.5 v^{*}_{1}-v^{*}_{2}\\ 0.5 v^{*}_{1}+v^{*}_{2}
\end{array}\right]\right)= \{0\} \times \mathbb{R}
\]
 provided $v^{*}_{2}=-0.5 v^{*}_{1}$. The respective linear system in variables $(v^{*},\eta)$
reduces to
\[
0=
\left[
\begin{array}{r}
v^{*}_{1}
\\ 0.5 v^{*}_{1}
\end{array}\right]  +
\left[
\begin{array}{rr}
0.5 & 0.5\\ -1 & 1
\end{array}\right]
\left[
\begin{array}{r}
0\\
\eta
\end{array}\right],
\]
verifying again the validity of (\ref{eq-4.4}). In the same way we compute that in the case
(\ref{eq-4.10}) one has  $\nabla g(\bar{x})u=(0,\frac{4}{3}q)^{T}$ and
\[
D^{*}N_{\mathbb{R}^{2}_{-}}
\left(\left(\left[ \begin{array}{l}
0\\ 0
\end{array}\right],
\left[ \begin{array}{l}
0\\ 0
\end{array}\right]\right);
\left(\left[ \begin{array}{c}
0\\
\frac{4}{3}q
\end{array}\right],
\left[ \begin{array}{c}
-\frac{2}{3}q\\ 0
\end{array}\right]\right)\right)
\left(\left[ \begin{array}{l}
0.5 v^{*}_{1}-v^{*}_{2}\\ 0.5 v^{*}_{1}+v^{*}_{2}
\end{array}\right]\right)= \mathbb{R} \times \{0\}
\]
provided $v^{*}_{2}=0.5 v^{*}_{1}$. Taking this into account, we arrive at the linear system
\[
0=
\left[
\begin{array}{r}
v^{*}_{1}\\
-0.5 v^{*}_{1}
\end{array}\right]  +
\left[
\begin{array}{rr}
0.5 & 0.5\\ -1 & 1
\end{array}\right]
\left[
\begin{array}{r}
\eta\\
0
\end{array}\right],
\]
showing that $v^{*}=0$. Finally, concerning the last case (\ref{eq-4.11}),
 $\nabla g(\bar{x})u=(0,0)$ and
\[
D^{*}N_{\mathbb{R}^{2}_{-}}
\left(\left(\left[ \begin{array}{l}
0\\ 0
\end{array}\right],
\left[ \begin{array}{l}
0\\ 0
\end{array}\right]\right);
\left(\left[ \begin{array}{c}
0\\ 0
\end{array}\right],
\left[ \begin{array}{c}
q\\ q
\end{array}\right]\right)\right)
\left(\left[ \begin{array}{l}
0.5 v^{*}_{1}-v^{*}_{2}\\ 0.5 v^{*}_{1}+v^{*}_{2}
\end{array}\right]\right)= \mathbb{R} \times \mathbb{R},
\]
 provided $v^{*}_{1}=0.5 v^{*}_{2}$ and, at the same time, $v^{*}_{1} =-0.5 v^{*}_{2}$. This
imediately implies that $v^{*}=0$ and we are done. On the basis of Theorem \ref{ThMain} we have shown
that the implicit multifunction $S$ generated by (\ref{eq-4.6}) has the Aubin property around $(0,0)$
and, for a given $q$, $DS(0,0)(q)$ is the set of solutions to (\ref{eq-4.7}).

Next we show that this result cannot be obtained via the Mordukhovich criterion and the standard
calculus, which amounts to proving that the ``standard'' adjoint GE (cf.\cite[Corollary 4.61]{M1})
possesses only the trivial solution. Indeed, this GE amounts in our case to

\begin{equation}\label{eq-4.12}
0 \in
\left[
\begin{array}{r}
v^{*}_{1}\\ -v^{*}_{2}
\end{array}\right]  +
\left[
\begin{array}{rr}
0.5 & 0.5\\ -1 & 1
\end{array}\right]
 D^{*}N_{\mathbb{R}^{2}_{-}}
\left(\left[ \begin{array}{l}
0\\ 0
\end{array}\right],
\left[ \begin{array}{l}
0\\ 0
\end{array}\right]\right)
\left(\left[ \begin{array}{l}
0.5 v^{*}_{1}-v^{*}_{2}\\ 0.5 v^{*}_{1}+v^{*}_{2}
\end{array}\right]\right)
\end{equation}
and it is easy to check that, e.g., $v^{*}=(-0.5, 1)^{T}$ is a solution of (\ref{eq-4.12}).
Consequently, the Aubin property of $S$ cannot be detected  in this way.\hfill $\triangle$
\end{example}
\section{Variational systems with conic constraint sets}

In this concluding section we will consider a variant of Theorem \ref{ThMain} under the additional
assumption that $D$ is a closed convex cone with vertex at $0$ and $P_{D}(\cdot)$ is directionally
differentiable over $\mathbb{R}^{s}$. As implied by (\ref{eq-3.7}), the variational system
(\ref{eq-4.1}) attains then the form 
\begin{equation}\label{eq-5.1}
\begin{split}
0 = & \nabla_{p}H(\bar{p},\bar{x})q + \nabla_{x}\mathcal{L}(\bar{p},\bar{x},\bar{\lambda})u+\nabla
g(\bar{x})^{T}\xi\\ & \nabla g(\bar{x})u = P^{\prime}_{D}(g(\bar{x})+\bar{\lambda}; \nabla
g(\bar{x})u+\xi)
\end{split}
\end{equation}
which, under the PDC condition at $g(\bar{x})$, further simplifies to the form (\ref{eq-4.5}). If
$D$ is the Carthesian product of Lorentz cones or the L\"{o}wner cone (\cite{BS}), then we  dispose
with an efficient formula for $P^{\prime}_{D}(\cdot ; \cdot)$ which depends on the position of
$(g(\bar{x}), \bar{\lambda})$ in $\Gr N_{D}$, cf. \cite[Lemma 2]{OS} and \cite[Theorem 4.7]{SS}.

Concerning the GE on the left-hand side of (\ref{eq-4.2}) or (\ref{eq-4.4}), it is advantageous to
rewrite it in terms of $P_{D}$ (instead of $N_{D}$). Let $(\bar{a},\bar{b}) \in \Gr N_{D}$. Since
\[
\Gr N_{D} = \left\{(a,b)\in \mathbb{R}^{s} \times \mathbb{R}^{s} \left| \left(
\begin{array}{c}
a+b\\ a
\end{array}
\right) \in \Gr P_{D}\right.\right\},
\]
one has, by virtue of \cite[Theorem 1.17]{M1}, that
\[
p \in \hat{D}^{*}N_{D}(a,b)(q) \Longleftrightarrow -q \in \hat{D}^{*}P_{D}(a+b,a)(-q-p)
\]
for any $(p,q)\in \mathbb{R}^{s} \times \mathbb{R}^{s}$. It follows that the GE on the left-hand
side of (\ref{eq-4.2}) can be equivalently written down as the system
\begin{align}
& 0 = \nabla_{x}\mathcal{L}(\bar{p},\bar{x},\bar{\lambda})^{T}v^{*}+ \nabla g(\bar{x})^{T}(d-\nabla
g(\bar{x})v^{*})  \label{eq-5.2}\\ & -\nabla g(\bar{x})v^{*} \in D^{*}P_{D} ((g(\bar{x})+\bar{\lambda},
g(\bar{x})); (\nabla g(\bar{x})u+\xi, \nabla g(\bar{x})u))(-d)\label{eq-5.3}
\end{align}
in variables $(v^{*},d) \in \mathbb{R}^{n} \times \mathbb{R}^{s}$. If $D$ is the Carthesian product
of Lorentz cones or the  L\"{o}wner cone, then the directional limiting coderivative of $P_{D}$ can be
computed by using Definition \ref{DefVarGeom}(ii) and the formulas for regular coderivatives of $P_{D}$
in \cite{OS}  and \cite{Ding}, respectively. For illustration consider the case when $D$ amounts to
just one Lorentz cone in $\mathbb{R}^{s}$, i.e.,
\[
D=\mathcal{K}:=\{(z_{0},\bar{z}) \in \mathbb{R} \times \mathbb{R}^{s-1}| z_{0}\geq \| \bar{z} \|\}.
\]
 We will analyze here only the most difficult situation when  $g(\bar{x})=0$ and $\bar{\lambda}=0$
and provide formulas for the directional limiting coderivatives of $P_{\mathcal{K}}$ at $(0,0)$ for all
possible nonzero directions from
\begin{equation}\label{eq-5.4}
T_{\Gr P_{\mathcal{K}}}(0,0) = \{(h,k)| k\in P_{\mathcal{K}}(h)\},
\end{equation}
see \cite[Lemma 2(iv)]{OS}. We have thus to distinguish between the following five situations:
\begin{align}
\bullet &~~ ~~h \in \inn \mathcal{K}, ~ k=h;\label{eq-5.5}\\
\bullet & ~~~~ h \in \inn \mathcal{K}^{\circ},~ k=0;\label{eq-5.6}\\
\bullet & ~~~~ h \not\in  \mathcal{K} \cup \mathcal{K}^{\circ}, k= P_{\mathcal{K}}(h);\label{eq-5.7}\\
\bullet &~~ ~~h \in \bd \mathcal{K}, ~ k=h;\label{eq-5.8}\\
\bullet & ~~~~ h \in \bd \mathcal{K}^{\circ},~ k=0.\label{eq-5.9}
\end{align}
 In the cases (\ref{eq-5.5}), (\ref{eq-5.6})  we get immediately from  \cite[Lemma 1(iv)]{OS} the
formulas
\begin{eqnarray}
 &  D^{*}P_{\mathcal{K}} ((0,0); (h,k))(u^{*}) = u^{*},\label{eq-13}\\
 & D^{*}P_{\mathcal{K}} ((0,0); (h,k))(u^{*}) = 0, \label{eq-14}
\end{eqnarray}
 respectively. Likewise, in the case (\ref{eq-5.7}) one has
\begin{eqnarray}
 &  D^{*}P_{\mathcal{K}} ((0,0); (h,k))(u^{*}) = \{C(w,\alpha) u^{*}| w \in \mathbb{S}_{n-1}, \alpha
 \in [0,1]\}, \label{eq-15}
\end{eqnarray}
where
\[
C(w,\alpha)=\frac{1}{2}\left [
\begin{array}{cc}
2 \alpha I+(1-2\alpha)w w^{T} & w\\ w^{T} & 1
\end{array}\right ].
\]
Concerning the case (\ref{eq-5.8}), by passing to subsequences if necessary,  one may have
sequences $(h_{i}, k_{i})\longsetto{{\Gr P_{\mathcal{K}}}}
 (h,k), \lambda_{i}\searrow 0$ such that for
$i$ sufficiently large one of the following three situations occurs:
\begin{itemize}
 \item[$\ast$]
 $h_{i} \not\in ~\mathcal{K} \cup \mathcal{K}^{0}~ (k_{i} = P_{\mathcal{K}}(h_{i}))$;
 \item[$\ast$]
 $h_{i} \in {\rm int}~\mathcal{K} ~(k_{i} = h_{i})$;
 \item[$\ast$]
  $h_{i} \in bd~\mathcal{K} ~(k_{i} = h_{i})$.
\end{itemize}
Correspondingly, we obtain from \cite[Lemma 1(iv) and Theorem 4]{OS},  that
\begin{equation}\label{eq-16}
D^{*}P_{\mathcal{K}}  ((0,0); (h,k))(u^{*}) = \{C(w,\alpha) u^{*}| w \in \mathbb{S}_{n-1}, \alpha \in
[0,1]\} ~ \cup
 \bigcup\limits_{A \in \mathcal{A}(u^{*})} {\rm conv} \{u^{*}, A u^{*}\},
\end{equation}
 where
\[
\mathcal{A}(u^{*}):= \left \{ \left. I + \frac{1}{2}\left [
\begin{array}{cr}
-w w^{T} & w\\ w^{T} & -1
\end{array}\right ] \right | w \in \mathbb{S}_{n-1},
\left \langle
\left [
\begin{array}{c}
-w\\ 1
\end{array}
\right ], u^{*}
\right \rangle \geq 0
\right \}.
\]
Analogously, in the  case (\ref{eq-5.9}),  by passing to subsequences if necessary,  one may have
sequences $(h_{i}, k_{i})\longsetto{{\Gr P_{\mathcal{K}}}}(h,k), \lambda_{i}\searrow 0$ such that for
$i$ sufficiently large one of the following three situations occurs:
\begin{itemize}
 \item[$\ast$]
 $h_{i} \not\in ~\mathcal{K} \cup ~\mathcal{K}^{0} ~(k_{i} = P_{\mathcal{K}}(h_{i}))$;
 \item[$\ast$]
 $h_{i} \in {\rm int}~\mathcal{K}^{0} ~(k_{i} = 0)$;
 \item[$\ast$]
  $h_{i} \in bd~\mathcal{K}_{0} ~(k_{i} = 0)$.
\end{itemize}
 Correspondingly, we obtain from \cite[Lemma 1(iv) and Theorem 4]{OS} that
\begin{equation}\label{eq-17}
D^{*}P_{\mathcal{K}} ((0,0); (h,k))(u^{*})=\{C(w,\alpha) u^{*}| w \in \mathbb{S}_{n-1}, \alpha \in
[0,1]\} ~\cup
 \bigcup\limits_{B \in \mathcal{B}(u^{*})} {\rm conv} \{u^{*}, B u^{*}\},
\end{equation}
 where
\[
\mathcal{B}(u^{*}):= \left \{ \left. \frac{1}{2}\left [
\begin{array}{cr}
w w^{T} & w\\ w^{T} & 1
\end{array}\right ] \right | w \in \mathbb{S}_{n-1},
\left \langle
\left [
\begin{array}{c}
w\\ 1
\end{array}
\right ], u^{*}
\right \rangle \geq 0
\right \}.
\]
Next we illustrate the above described procedure via a conic reformulation of \cite[Example 5]{GO3}.
\begin{example}\label{ExTwo} Consider the solution map $S:\mathbb{R} \rightrightarrows\mathbb{R}^{2}$
of the GE given by (\ref{eq-99}), (\ref{eq-104})  with
\[
H(p,x)=\left[
\begin{array}{c}
x_{1} - p\\ -x_{2}
\end{array}\right], \quad
 g(x)=\left[
\begin{array}{l}
2x_{2} \\ -x_{1}
\end{array}\right]
\]
 and $D=\mathcal{K}$ being the Lorentz cone in $\mathbb{R}^{2}$. Let $(\bar{p},\bar{x})= (0,(0,0))$
be the reference point so that $\bar{\lambda}=(0,0)$. It is easy to see that assumptions (A1), (A2) are
fulfilled and, since the Lorentz cone in $\mathbb{R}^{2}$ is a polyhedral set, instead of
(\ref{eq-5.1}) we can compute the ``critical'' directions via (\ref{eq-4.5}). The variational system
(\ref{eq-4.5}) attains the form
\begin{equation}\label{eq-5.10}
0 = \left[
\begin{array}{r}
-q\\ 0
\end{array}\right]  +
\left[
\begin{array}{r}
u_{1}\\ -u_{2}
\end{array}\right]  +
\left[
\begin{array}{rr}
0 & -1\\ 2 & 0
\end{array}\right]\xi, ~
\xi \in N_{\mathcal{K}}
\left( \left[
\begin{array}{l}
2 u_{2}\\ - u_{1}
\end{array}\right]\right).
\end{equation}
It is not difficult to compute that for $q \leq 0$ one has three different solutions $(u,\xi)$ of
(\ref{eq-5.10}), namely 
\begin{align}
& u_{1}= q,~ u_{2}=0,~ \xi_{1}=0,~ \xi_{2}=0 \label{eq-5.11}\\ & u_{1}= \frac{4}{3}q,~ u_{2}=
-\frac{2}{3}q,~ \xi_{1}=-\frac{1}{3}q,~ \xi_{2}=\frac{1}{3}q \label{eq-5.12}\\ & u_{1}= \frac{4}{3}q,~
u_{2}=\frac{2}{3}q,~ \xi_{1}= \frac{1}{3}q,~ \xi_{2}= \frac{1}{3}q \label{eq-5.13}
\end{align}
and for $q \geq 0$ one has the unique solution
\begin{equation}\label{eq-5.14}
u_{1}=u_{2}=0, ~~\xi_{1}=0, ~~ \xi_{2}=-q.
\end{equation}
So, assumption (i) of Theorem \ref{ThMain} is fulfilled and we will check assumption (iv). Starting
with (\ref{eq-5.11}), system (\ref{eq-5.2}), (\ref{eq-5.3}) attains the form
\begin{align}
 0 =
\left[
\begin{array}{r}
v^{*}_{1}\\ -v^{*}_{2}
\end{array}\right]  +
\left[
\begin{array}{cc}
0 & -1\\ 2 & 0
\end{array}\right]
\left[ \begin{array}{ccc}
1 & & 0
\\ 0 & & 4
\end{array}\right]v^{*}=
\left[ \begin{array}{l}
-d_2\\ -5 v^{*}_{2} + 2d_{1}
\end{array}\right] \label{eq-5.15}\\[1ex]
\left[
\begin{array}{r}
-2 v^{*}_{2}\\ v^{*}_{1}
\end{array}\right] \in D^{*}P_{\mathcal{K}}
\left((0,0);
\left(\left[ \begin{array}{r}
0\\ -q
\end{array}\right],
\left[ \begin{array}{r}
0\\ -q
\end{array}\right]\right)\right)(-d).\label{eq-5.16}
\end{align}
 By virtue of formula (\ref{eq-13}) this system reduces to the equations
 $$d_{2}=0, ~ d_{1}=
\frac{5}{2}v^{*}_{2}, ~ v^{*}_{1}=0, ~ 2 v^{*}_{2}= d_{1},$$
verifying that $v^{*}=0$. In the case
(\ref{eq-5.12}), one arrives at the equation (\ref{eq-5.15}) together with the relation
\begin{align}\label{eq-5.17}
\left[
\begin{array}{r}
-2 v^{*}_{2}\\ v^{*}_{1}
\end{array}\right] \in D^{*}P_{\mathcal{K}}
\left((0,0);
\left(\left[ \begin{array}{r}
-\frac{5}{3}q\\ -q
\end{array}\right],
\left[ \begin{array}{r}
-\frac{4}{3}q\\ -\frac{4}{3}q
\end{array}\right]\right)\right)(-d).
\end{align}
 Now we have to employ formula (\ref{eq-15}). For $w=-1$ one obtains from (\ref{eq-5.17}) the
equation
\[
\left[
\begin{array}{r}
-2 v^{*}_{2}\\ v^{*}_{1}
\end{array}\right] = -
\left[
\begin{array}{rr}
0.5 & -0.5\\ -0.5 & 0.5
\end{array}\right]d
\]
which, together with (\ref{eq-5.15}),  implies that $v^{*}=0$. For $w=1$ one obtains from
(\ref{eq-5.17}) the  equation
\[
\left[
\begin{array}{r}
-2 v^{*}_{2}\\ v^{*}_{1}
\end{array}\right] = -
\left[
\begin{array}{cc}
0.5 & 0.5\\ 0.5 & 0.5
\end{array}\right]d
\]
that again implies that $v^\ast=0$. Thus
 the case (\ref{eq-5.12}) is completed. Likewise, in the remaining cases (\ref{eq-5.13}),
 (\ref{eq-5.14}) we apply the formulas (\ref{eq-15}) and (\ref{eq-14}), respectively, and verify again
 that in all solutions of the respective system (\ref{eq-5.2}), (\ref{eq-5.3}) one has $v^{*}=0$. The examined solution map
 $S$ has thus the Aubin property around $(\bar{p},\bar{x})$. Note that, as in Example \ref{ExOne}, this
 conclusion cannot be made on the basis of the standard criteria.

\hfill$\triangle$
\end{example}

\section{Concluding remarks}

The formulas provided in the second part of Section 5 for $D$ being the Lorentz cone could easily be
extended to the case when $D$ amounts to the Carthesian product of several Lorentz cones. Further, on
the basis of \cite{Ding} one could compute the directional limiting coderivatives of the projection
mapping onto the L\"{o}wner cone which would enable us to apply the presented theory also to
parameterized semidefinite programs. Finally, one could think of variational systems, not having the
(relatively simple) structure (\ref{eq-104}). For example,  $p$ could arise also in the constraints or
one could consider implicit constraints like in quasi-variational inequalities \cite{QVI}. All these
situations offer an interesting topic for a future research.

\section*{Acknowledgements}
 The research of the first author was supported by the Austrian Science Fund (FWF) under grant
 P29190-N32. The research of the second author was supported by the Grant Agency of the Czech Republic,
 project 15-00735S and the Australian Research Council, project  DP160100854.

\end{document}